\documentclass{amsart}%
\usepackage{amsmath}
\usepackage{amssymb}
\usepackage{amsfonts}
\usepackage{graphicx}%
\setcounter{MaxMatrixCols}{30}
\providecommand{\U}[1]{\protect\rule{.1in}{.1in}}
\newtheorem{theorem}{Theorem} [subsection]
\theoremstyle{plain}

\newtheorem{corollary} [theorem] {Corollary}

\newtheorem{lemma} [theorem] {Lemma}

\newtheorem{remark} [theorem]{Remark}

\numberwithin{equation}{subsection}
\begin{document}
\title[Endoscopic factors]{On the structure of endoscopic transfer factors}
\author{D. Shelstad}
\address{Mathematics Department\\
Rutgers University-Newark\\
Newark NJ 07102}
\email{shelstad@rutgers.edu}

\begin{abstract}
There are two versions of endoscopic transfer factors to accommodate the
different versions, classical or renormalized, of the local Langlands
correspondence. An examination of the structure of these complex-valued
factors shows that the versions are related in a simple manner. We gather
various consequences and indicate ways in which these are useful for the
harmonic analysis associated with endoscopic transfer.

\end{abstract}
\maketitle

\section{\textbf{Introduction}}

The purpose of this paper is to examine certain properties of endoscopic
transfer factors. We are guided by the discussion in Sections 4 and 5 of
\cite{KS12} on the expected two versions for the local Langlands
correspondence and some of the implications for transfer.

We begin with transfer factors for twisted endoscopy, specifically the factors
$\Delta^{\prime}$ and $\Delta_{D}$ of \cite{KS12} for the transfer of orbital
integrals. These are assembled in \cite{KS12}, primarily from terms in
\cite{KS99} and \cite{LS87}. Our first goal here is a brief review and then to
establish a simple relation between $\Delta^{\prime}$ and $\Delta_{D}$; see
Corollary 6.1.2 and then Lemmas 6.3.1, 6.3.2 for the particular case of
Whittaker normalization. This allows for quick passage back and forth between
the associated transfers; see (6.4) where we make this explicit. In the
nonarchimedean case the celebrated theorem of Waldspurger proves the existence
of transfer for $\Delta^{\prime}\ $\cite{Wa08}. In the archimedean case,
\cite{Sh12} also uses $\Delta^{\prime}.$

We continue in the archimedean case to the attached spectral transfer,
confining our attention here to tempered representations and standard
endoscopy. The spectral transfer factors of \cite{Sh10} provide coefficients
for the transfer of traces dual to the transfer of orbital integrals. There
are relations that parallel our earlier observations; see Lemmas 8.2.1, 8.3.1.
These are applied to the dual transfer in (8.4) and (8.5).

Our main interest is then the structure on tempered packets provided by the
spectral transfer factors; see (9.1). We also relate the two versions of the
\textit{strong base point property} for tempered packets in the Whittaker
normalization, thereby confirming that the renormalized version is also true;
see Lemma 9.2.1. Our results will be applied to the twisted setting in
\cite{Sh} where we construct twisted spectral factors that provide an
additional structure on packets preserved by the twisting. Compatibility of
the additional structure with the standard one is of significance globally
\cite[Chapter 2]{Ar13}. This provides additional motivation for our approach
in Sections 2 - 4.

Note: A preliminary version of this paper was posted on the author's website
andromeda.rutgers.edu/\symbol{126}shelstad in July, 2012.

\section{\textbf{Setting}}

We consider twisted endoscopy over a local field $F$ of characteristic zero.
Thus $G$ is a connected reductive algebraic group defined over $F,$ $\theta$
is an $F$-automorphism of $G$ and $a$ is a $1$-cocycle of the Weil group
$W_{F}$ in the center $Z_{G^{\vee}}$ of the complex dual group $G^{\vee}.$
There are twisting characters on $G(F)$ attached to $a$ which we label as
$\varpi,\varpi_{D}$ (\cite{KS99}, \cite{KS12} use the notation $\omega
,\omega_{D});$ they are discussed in (4.2). Although we do not use it
explicitly here, there is also an underlying inner twist $(G,\theta,\psi)$ of
a quasi-split pair $(G^{\ast},\theta^{\ast})$, where $\theta^{\ast}$ preserves
an $F$-splitting of $G^{\ast}$; see Appendix B of \cite{KS99} for definitions.
In the transfer theorems we cite, it is assumed that $\theta^{\ast}$ has
finite order.

For a set $\mathfrak{e}=(H,\mathcal{H},s)$ of endoscopic data we follow the
definitions of Chapter 2 of \cite{KS99}, except that we assume harmlessly that
$\mathcal{H}$ is a subgroup of $^{L}G$ and $\xi:\mathcal{H\rightarrow}$
$^{L}G$ is inclusion. We then usually ignore $\xi$ in notation. We also fix a
$z$-pair $(H_{1},\xi_{1})$, again as in \cite{KS99}, and write $\mathfrak{e}%
_{z}$ for the endoscopic data $\mathfrak{e}$ so supplemented. Then $H_{1}$ is
the \textit{endoscopic group} attached to $\mathfrak{e}_{z}$ by which we mean
the group that appears in the associated endoscopic transfer to $G$.

We will call the supplemented data $\mathfrak{e}_{z}$ \textit{bounded }if the
section $c:W_{F}\rightarrow\mathcal{H}$ of p. 18 of \cite{KS99} and the
embedding $\xi_{1}:\mathcal{H\rightarrow}$ $^{L}H_{1}$ \ are such that
$\xi\circ c$, $\xi_{1}\circ c$ are bounded Langlands parameters for $G,H_{1} $
respectively.\textit{\ }It is not difficult to check that given $\mathfrak{e}
$, we may always choose $c$ and $\xi_{1}$ so that this is true. Boundedness of
$\mathfrak{e}_{z}$ will be used in Lemma 5.0.2 and in the notion of a related
pair of bounded Langlands parameters for $H_{1},G$ in (7.2).

There is a map $\mathfrak{e}\rightarrow\mathfrak{e}^{\prime}$ on endoscopic
data which we will use just in the case of standard endoscopy ($\theta=id,$
$a=1)$ where it is particularly simple to describe. Thus suppose that
$\mathfrak{e}=(H,\mathcal{H},s)$ is standard. This returns us to the setting
of \cite{LS87}, although $z$-pairs are used a little differently there. We
will follow \cite{KS99} regarding $z$-pairs; see (3.1) below. From
$\mathfrak{e}$ we obtain another standard set of endoscopic data
$\mathfrak{e}^{\prime}$ by inverting the datum $s$: $\mathfrak{e}^{\prime
}=(H,\mathcal{H},s^{-1}).\ $The same $z$-pair $(H_{1},\xi_{H_{1}})$ serves
$\mathfrak{e}^{\prime}$ and we denote the supplemented data by $\mathfrak{e}%
_{z}^{\prime}. $ The endoscopic group and the point correspondence for
geometric transfer are unchanged. Write $\Delta$ for the relative transfer
factor attached to $\mathfrak{e}_{z}$ in \cite{LS87}; see (3.1). Then we
attach to $\mathfrak{e}_{z}$ another transfer factor $\Delta^{\prime}$ by
taking the factor $\Delta$ for $\mathfrak{e}_{z}^{\prime}.$ This construction
has the same effect as inverting the Tate-Nakayama pairing in the definition
of $\Delta$; see (3.1) again. We may also apply it in other settings; see
(6.2), (6.3).

\subsection{Organizing terms in relative transfer factors}

Terms to be used in the definition of (relative) geometric transfer factors
are described in Section 3\ of \cite{LS87} and Section 4 of \cite{KS99}. One
is a quotient labelled $\Delta_{IV}$ made from discriminants. It may be used
instead in the definition of orbital integrals themselves, specifically in the
normalization of invariant measures on orbits. Lemma 2.12 of \cite{La83} (on
measures for orbital integrals as fiber integrals) provides additional
geometric motivation for doing so. Thus, throughout this paper:

\begin{itemize}
\item \textit{we omit }$\Delta_{IV}$\textit{\ from all formulas for transfer
factors.}
\end{itemize}

Of the other terms, $\Delta_{II}$ is defined concretely and the properties we
will need are read off quickly from lemmas in \cite{LS87} and \cite{KS99}.
There remain $\Delta_{I}$ and $\Delta_{III}$, and in the case of standard
endoscopy $\Delta_{III}$ splits into a product $\Delta_{III_{1}}%
\Delta_{III_{2}}$. These various terms can be gathered into a single term
quite naturally (as remarked in \cite{KS99}) but that is not our concern here.
As defined in \cite{LS87} and \cite{KS99}, each term $\Delta_{I}$,
$\Delta_{III}$, $\Delta_{III_{1}}$, $\Delta_{III_{2}}$ is given by a pairing
in Galois (hyper)cohomology. It will be helpful to review briefly the pairings
involved and their various compatibilities.

\subsection{Pairings in Galois cohomology}

We return to Appendix A of \cite{KS99}. In (A.3.12) a pairing $\left\langle
-,-\right\rangle $ is defined between two groups. The first is the Galois
hypercohomology group $H^{1}(F,T\overset{f}{\longrightarrow}U)$ attached there
to the complex $T\overset{f}{\longrightarrow}U$ of tori over $F,$ concentrated
in degrees $0$ and $1$. The second is the group $H^{1}(W_{F},\widehat{U}%
\overset{\widehat{f}}{\longrightarrow}\widehat{T})$ attached to the complex
$\widehat{U}\overset{\widehat{f}}{\longrightarrow}\widehat{T},$ concentrated
in the same degrees and made from the corresponding $\mathbb{C}$-duals.

In the case that $f$ is trivial, the pairing $\left\langle -,-\right\rangle $
is the product of the classical Langlands pairing\textit{\ }(or of the
\textit{inverse} of the renormalized Langlands pairing $\left\langle
-,-\right\rangle _{D}$) and the Tate-Nakayama pairing as on p. 138 of
\cite{KS99} which we will denote temporarily by $\left\langle -,-\right\rangle
_{tn}$.

In general, the compatibilities for $\left\langle -,-\right\rangle $ are as
follows:
\begin{equation}
\left\langle j(u),\widehat{z}\right\rangle =\left\langle u,\widehat{i}%
(\widehat{z})\right\rangle _{D}^{-1}%
\end{equation}
and%
\begin{equation}
\left\langle z,\widehat{j}(\widehat{t})\right\rangle =\left\langle
i(z),\widehat{t}\right\rangle _{tn}.
\end{equation}
We refer to pp. 137, 138 of \cite{KS99} for unexplained notation. We also
record the compatibility statements for the inverted pairing $\left\langle
-,-\right\rangle _{\ast}$ as%
\begin{equation}
\left\langle j(u),\widehat{z}\right\rangle _{\ast}=\left\langle u,\widehat{i}%
(\widehat{z})\right\rangle ^{-1},
\end{equation}
where on the right we have the classical Langlands pairing for tori, and%
\begin{equation}
\left\langle z,\widehat{j}(\widehat{t})\right\rangle _{\ast}=\left\langle
i(z),\widehat{t}\right\rangle _{tn}^{-1}.
\end{equation}

\section{\textbf{Standard endoscopy}}

\subsection{Factors $\Delta$ and $\Delta^{\prime}$}

As remarked already, we will use the convention of \cite{KS99} for $z$-pairs.
Thus the standard \textit{relative} transfer factor $\Delta=\Delta(\gamma
_{1},\delta;\gamma_{1}^{\dagger},\delta^{\dagger})$ is attached to two very
regular related pairs $(\gamma_{1},\delta)$, $(\gamma_{1}^{\dag},\delta^{\dag
})$ of points in $H_{1}(F)\times G(F).$ In brief, $(\gamma_{1},\delta)$ is a
very regular related pair if $\gamma_{1}$ is strongly $G$-regular, $\delta$ is
strongly regular and the image $\gamma$ of $\gamma_{1}$ in $H(F)$ coincides
with the image of $\delta$ under the inverse $T\rightarrow T_{H}$ of an
admissible isomorphism of maximal tori $T_{H}\rightarrow T,$ where
$T_{H}=Cent(\gamma,H)$ and $T=Cent(\delta,G)$; see \cite{LS87}. When we come
to the twisted case in the next section we will use the generalization of this
formalism to the norm correspondence in Section 3 of \cite{KS99}; there
$T\rightarrow T_{H}$ becomes an \textit{abstract norm} map.

Following our removal of $\Delta_{IV},$ the factor $\Delta$ is a product%
\begin{equation}
\Delta:=\Delta_{I}\Delta_{II}\Delta_{III_{1}}\Delta_{III_{2}}.
\end{equation}
Only the term $\Delta_{III_{1}}=\Delta_{III_{1}}(\gamma_{1},\delta;\gamma
_{1}^{\dag},\delta^{\dag})$ is genuinely relative in the sense that every
other term is defined as a quotient of absolute terms, $\Delta_{I}(\gamma
_{1},\delta)\diagup\Delta_{I}(\gamma_{1}^{\dag},\delta^{\dag})$ and so on.

With the exception of $\Delta_{III_{2}},$ each term depends on the images
$\gamma,$ $\gamma^{\dag}$ of $\gamma_{1},$ $\gamma_{1}^{\dag}$ in
$H(F)=H_{1}(F)\diagup Z_{1}(F)$ rather than on $\gamma_{1},$ $\gamma_{1}%
^{\dag}$ themselves and so, with this exception, we may use the definitions of
Section 3 of \cite{LS87} as written.

Now consider the definition of $\Delta_{III_{2}}(\gamma_{1},\delta)$. If
$H_{1}=H,$ so that $\gamma_{1}=\gamma,$ then we again follow \cite{LS87}.
Namely, fix $\chi$-data $\chi=\{\chi_{\alpha}\}$ for the tori $T_{H}$, $T $ as
in (3.1) of \cite{LS87}. In fact, we will always make in advance all three
choices of that section, namely $\chi$-data, $a$-data and toral data
$T_{H}\rightarrow T,$ whenever we write the terms of transfer factors. Write
$a(\chi)$ for the cocycle $a$ defined on p. 247. It defines an element of
$H^{1}(W_{F},T^{\vee})$ which we also denote by $a(\chi).$ Then
\begin{equation}
\Delta_{III_{2}}(\gamma,\delta):=\left\langle a(\chi),\delta\right\rangle ,
\end{equation}
where $\left\langle -,-\right\rangle $ denotes the classical Langlands pairing
for tori.

Now to drop the assumption $H_{1}=H$, we replace $T$ by the pullback $T_{2}$
of the isomorphism $T\rightarrow$ $T_{H}$ given by toral data and the
projection $T_{1}\rightarrow T_{H}=T_{1}\diagup Z_{1}$; $T_{2}$ is a torus and
is labelled $T_{1}$ on p. 42 of \cite{KS99} where our present $T_{1}$ is
written $T_{H_{1}}.$ Define $\delta_{2}=(\gamma_{1},\delta)\in T_{2}(F)$ and
$a_{2}(\chi)$ to be the class in $H^{1}(W_{F},T_{2}^{\vee})$ of the cocycle
$a_{T}(w)$ on p. 45 of \cite{KS99}. Then
\begin{equation}
\Delta_{III_{2}}(\gamma,\delta):=\left\langle a_{2}(\chi),\delta
_{2}\right\rangle .
\end{equation}

The terms $\Delta_{I}$ and $\Delta_{III_{1}}$ are defined in terms of the
Tate-Nakayama pairing for tori. We see immediately from the definitions in
\cite{LS87} that replacing the endoscopic datum $s$ by $s^{-1}$ inverts these
terms, and that no others are affected. Thus the factor $\Delta^{\prime}$ can
be written as
\begin{equation}
\Delta^{\prime}=(\Delta_{I})^{-1}\Delta_{II}\text{ }(\Delta_{III_{1}}%
)^{-1}\Delta_{III_{2}}.
\end{equation}
In the archimedean case, both $\Delta_{I}$ and $\Delta_{III_{1}}$ are signs,
and so $\Delta^{\prime}$ coincides with $\Delta.$

\subsection{Factor $\Delta_{D}$}

It will be helpful to introduce the factor $\Delta_{D}$ of (5.1) in
\cite{KS12} slightly differently, and we also alter notation. Namely, we now
define the term $\Delta_{III_{2,D}}$ by just replacing the pairing in the
definition of $\Delta_{III_{2}}$ with the renormalized one. In other words,
\begin{equation}
\Delta_{III_{2},D}:=(\Delta_{III_{2}})^{-1}.
\end{equation}
The factor $\Delta_{D}$ is then given as%
\begin{equation}
\Delta_{D}:=\Delta_{I}\text{ }(\Delta_{II})^{-1}\Delta_{III_{1}}%
\Delta_{III_{2,D}}.
\end{equation}
Thus it is immediate that%
\begin{equation}
\Delta_{D}=(\Delta^{\prime})^{-1}.
\end{equation}

The formula (5.3.1) of \cite{KS12} shows that we have not changed the factor
$\Delta_{D}$ itself, at least if $H_{1}=H.$ We review the argument in our
present notation. The terms $\Delta_{II}$ and $\Delta_{III_{2}}$ each depend
on the choice of $\chi$-data but their product does not, and no other term
depends on $\chi$-data; see Lemmas 3.3.D, 3.5.A and the proof of Theorem 3.7.A
of \cite{LS87}, with minor adjustments for the case $H_{1}\neq H$ (or we could
just cite Theorem 4.6.A of \cite{KS99}). Write $\chi^{-1}$ for the $\chi$-data
$\{\chi_{\alpha}^{-1}\}$ and set $\Delta_{II}=\Delta_{II}(\chi)$ when $\chi$
is used in the definition of $\Delta_{II}.$ Clearly,
\begin{equation}
(\Delta_{II}(\chi))^{-1}=\Delta_{II}(\chi^{-1}).
\end{equation}
The term $\Delta_{II}$ also depends on the choice of $a$-data; we assume the
same choice on both sides of (3.2.4).

The use of $\chi$-data in the definition of $\Delta_{III_{2}}$ is more subtle
but we do have the simple relation%
\begin{equation}
\Delta_{III_{2},D}(\gamma,\delta)=\left\langle a_{2}(\chi),\delta
_{2}\right\rangle _{D}=\left\langle a_{2}(\chi),\delta_{2}\right\rangle ^{-1}.
\end{equation}
Because we may replace $\chi$ by $\chi^{-1}$ without affecting $\Delta
_{II}\Delta_{III_{2}}$ we may rewrite $\Delta_{D}$ in a form where only
$\Delta_{III_{2}}$ is changed:
\begin{equation}
\Delta_{D}=\Delta_{I}\Delta_{II}\Delta_{III_{1}}\Delta_{III_{2,D}}(\chi^{-1}).
\end{equation}
Since
\begin{equation}
\Delta_{III_{2,D}}(\chi^{-1})(\gamma,\delta)=\left\langle a_{2}(\chi
^{-1}),\delta_{2}\right\rangle _{D}=\left\langle a_{2}(\chi^{-1}),\delta
_{2}\right\rangle ^{-1},
\end{equation}
this agrees with the definition of $\Delta_{D}$ in (5.1) of \cite{KS12}, which
completes our review.

\section{\textbf{Twisted transfer factors}}

We will be working with the terms $\Delta_{I},$ $\Delta_{II},$ $\Delta_{III}$
as defined in \cite{KS99}, with the following caveat.

\begin{itemize}
\item $\Delta_{I}\ $\textit{is replaced by the term }$\Delta_{I}^{new}%
$\textit{\ of (3.4) in \cite{KS12} without change in notation,}

\item $\Delta_{II}$\textit{\ is exactly as in (4.3) of \cite{KS99},}

\item $\Delta_{III}$\textit{\ is exactly as in (4.4) of \cite{KS99}; in
particular, it is defined in terms of the pairing (A.3.12).}
\end{itemize}

Again we are concerned with relative transfer factors, and as in the standard
case above, only $\Delta_{III}$ is genuinely relative. We will also assume for
convenience that we are in the setting of Chapter 4 of \cite{KS99}, where no
twisting is needed on the endoscopic group. The modifications for the general
case in Sections 5.3 and 5.4 of \cite{KS99} are simple but require a more
detailed look at the norm correspondence.

It is convenient to introduce also the term $\Delta_{III}^{\ast}$ defined via
the inverted pairing $\left\langle -,-\right\rangle _{\ast}$ from (2.2) above.
Of course we have $\Delta_{III}^{\ast}=(\Delta_{III})^{-1}.$ We will also
sometimes write $\Delta_{I}^{\ast}$ for $(\Delta_{I})^{-1}.$

Recall from Section 1 of \cite{KS12} that the product $\Delta_{I}\Delta
_{II}\Delta_{III}$, called $\Delta$ in \cite{KS99}, is \textit{not} a transfer
factor. In other words, the constructions of Section 4 of \cite{KS99} and the
pairing of (A.3.12) do not provide an extension of the factor $\Delta$ of
\cite{LS87} to the twisted case. Following \cite{KS12} we may extend
$\Delta^{\prime}$ by these methods (that this is possible was pointed out by
Waldspurger), and also $\Delta_{D}.$ In (4.1) and (4.2) we give a brief review.

\subsection{Factor $\Delta^{\prime}$}

First, for compatibility with the classical local Langlands correspondence, we
follow \cite{KS12} and introduce%
\begin{equation}
\Delta^{\prime}:=(\Delta_{I}\Delta_{III})^{-1}\Delta_{II}=\Delta_{I}^{\ast
}\Delta_{III}^{\ast}\Delta_{II}.
\end{equation}
In the case of standard endoscopy, the definitions of Section 4.4 of
\cite{KS99} imply that%
\begin{equation}
\Delta_{III}=\Delta_{III_{1}}(\Delta_{III_{2}})^{-1}=\Delta_{III_{1}}%
\Delta_{III_{2,D}}%
\end{equation}
(see also (2.2) above), and so%
\begin{equation}
\Delta^{\prime}=(\Delta_{I}\Delta_{III_{1}})^{-1}\Delta_{III_{2}}\Delta_{II}%
\end{equation}
which coincides with our definition of $\Delta^{\prime}$ using \cite{LS87}.

\textit{Remark:} In \cite{KS12} we corrected an error in \cite{KS99} by
replacing $\Delta$ with $\Delta^{\prime}$. Here we highlight changes for key
arguments in Chapters 4 and 5. First, we replace $\left\langle
-,-\right\rangle $ by $\left\langle -,-\right\rangle _{\ast}$ throughout.
Theorem 4.6.A now states the independence of $\Delta^{\prime}$ from the
choices for $\chi$-data, $a$-data and toral data. It follows because (i)
$\Delta_{III}^{\ast}$ has the properties we wanted for $\Delta_{III}$ in
regard to $\chi$-data and the classical Langlands pairing for tori, (ii) use
of $\left\langle -,-\right\rangle _{\ast}$ does not affect cancellation for
$\Delta_{I}^{\ast},$ $\Delta_{III}^{\ast}$ in regard to toral data, and (iii)
a change in $a$-data causes only sign changes in each of $\Delta_{I},$
$\Delta_{II}$ and so $\Delta_{I}^{\ast},$ $\Delta_{II}$ works just as well as
$\Delta_{I},$ $\Delta_{II}.$ Now consider the results of (5.1) in \cite{KS99}.
If we replace $\Delta$ by $\Delta^{\prime}$ in Lemmas 5.1.A, 5.1.B then the
statements remain correct and the same proofs apply. Lemma 5.1.C and Theorem
5.1.D may be rewritten as statements about the values of relative transfer
factors (we write an analogue for $\Delta_{D}$ explicitly in (4.2.8) below).
Lemma 5.1.C and its generalization at the bottom of p. 53 also concern
characters on certain central subgroups. For the proofs we follow those in
\cite{KS99} and continue to use the classical Langlands pairing for tori.
Next, the formula (1) of Theorem 5.1.D links our factor with the notion of
$\kappa$-orbital integral. The statement now involves $\left\langle
-,-\right\rangle _{\ast},$ and the proof is the same. The formula (2) in
Theorem 5.1.D concerns twisting by a nontrivial quasi-character and is
critical for a well-defined transfer statement for twisted orbital integrals
in that case. The proof is delicate, but we may again follow line by line that
in \cite{KS99} with the caveats we have made. In particular, the parenthetical
remark on p. 56, l. -9 is replaced by \textit{use the pairing} $\left\langle
-,-\right\rangle _{\ast}$, and on the next line we invoke (2.2.3) above in
place of (A.3.13).

\subsection{Factor $\Delta_{D}$}

Next, for compatibility with the renormalized Langlands correspondence, we
have%
\begin{equation}
\Delta_{D}:=\Delta_{I}\Delta_{III}(\Delta_{II})^{-1}.
\end{equation}
It of course extends $\Delta_{D}$ in the standard case, and again
\begin{equation}
\Delta_{D}=(\Delta^{\prime})^{-1}.
\end{equation}
As in the standard case, we can replace $\chi$-data $\chi$ by $\chi^{-1}$
without harm. Thus we get the alternative formula%
\begin{equation}
\Delta_{D}=\Delta_{I}\Delta_{II}\Delta_{III}(\chi^{-1})=\Delta_{I}\Delta
_{II}\Delta_{III}^{\ast}(\chi^{-1})^{-1}.
\end{equation}

For key properties of $\Delta_{D}$ we can appeal to (4.2.2) and the
corresponding properties for $\Delta^{\prime}$ reviewed in the last section
(see Remark). Recall from \cite{KS12} that in place of the two twisting
characters $\varpi,$ $\lambda_{H_{1}}$ we now use their companions $\varpi
_{D},\lambda_{H_{1},D}$ in the various statements. More precisely, the
quasi-character $\varpi$ on $G(F)$ attached on p.17 of \cite{KS99} to the
class $\mathbf{a}$ in $H^{1}(W_{F},Z_{G^{\vee}})$ of our twisting datum $a$ is
replaced by $\varpi_{D}$ given by
\begin{equation}
\varpi_{D}(g):=\left\langle g,\mathbf{a}\right\rangle _{D}=\varpi(g)^{-1}.
\end{equation}
Also the quasi-character $\lambda_{H_{1}}$ on $Z_{1}(F)$ is replaced by
$\lambda_{H_{1},D},$ where
\begin{equation}
\lambda_{H_{1},D}(h_{1}):=\left\langle h_{1},\mathbf{b}\right\rangle
_{D}=\lambda_{H_{1}}(h_{1})^{-1}%
\end{equation}
and $\mathbf{b}$ is the Langlands parameter defined by the homomorphism
\begin{equation}
W_{F}\overset{c}{\longrightarrow}\mathcal{H}\overset{\xi_{_{1}}%
}{\longrightarrow}\text{ }^{L}H_{1}\longrightarrow\text{ }^{L}Z_{1}%
\end{equation}
considered on p. 23. For example, we restate Lemma 5.1.C as%
\begin{equation}
\Delta_{D}(z_{1}\gamma_{1},\delta)=\lambda_{H_{1},D}(z_{1})^{-1}\Delta
_{D}(\gamma_{1},\delta),
\end{equation}
or, in terms of relative factors,
\begin{equation}
\Delta_{D}(z_{1}\gamma_{1},\delta;\gamma_{1},\delta)=\lambda_{H_{1},D}%
(z_{1})^{-1},
\end{equation}
and (2) of Theorem 5.1.D as
\begin{equation}
\Delta_{D}(\gamma_{1},\delta^{\prime})=\varpi_{D}(h)\Delta_{D}(\gamma
_{1},\delta),
\end{equation}
where $\delta^{\prime}=h^{-1}\delta\theta(h)$ and $h\in G(F)$.

\section{\textbf{Two lemmas for relative factors}}

We repeat (4.2.2) as:

\begin{lemma}
For all very regular related pairs $(\gamma_{1},\delta)$, $(\gamma_{1}^{\dag
},\delta^{\dag})$ we have
\[
\Delta_{D}(\gamma_{1},\delta;\gamma_{1}^{\dag},\delta^{\dag})=\Delta^{\prime
}(\gamma_{1},\delta;\gamma_{1}^{\dag},\delta^{\dag})^{-1}.
\]

\end{lemma}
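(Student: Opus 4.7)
The statement is essentially (4.2.2) restated at the level of relative factors, so the plan is to unpack the definitions in (4.1.1) and (4.2.1) and check that the product $\Delta_D \cdot \Delta'$ collapses term by term to $1$. Concretely, I would write
\[
\Delta'=\Delta_{I}^{\ast}\Delta_{II}\Delta_{III}^{\ast}
\quad\text{and}\quad
\Delta_{D}=\Delta_{I}\Delta_{II}^{-1}\Delta_{III},
\]
recalling that by definition $\Delta_{I}^{\ast}=(\Delta_{I})^{-1}$ and $\Delta_{III}^{\ast}=(\Delta_{III})^{-1}$, so that the absolute (in the case of $\Delta_{I},\Delta_{II}$) and the relative (in the case of $\Delta_{III}$) factors appearing in $\Delta_D$ are the pointwise inverses of those in $\Delta'$.

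Next, I would pass to relative form. For $\Delta_{I}$ and $\Delta_{II}$, which are absolute once the auxiliary $a$-data, $\chi$-data and toral data are fixed, the relative version at the pair $((\gamma_{1},\delta),(\gamma_{1}^{\dag},\delta^{\dag}))$ is the ratio of the absolute values. For $\Delta_{III}$, which is the only genuinely relative term, the definition already produces a value depending on the two pairs. Writing out
\[
\Delta'(\gamma_{1},\delta;\gamma_{1}^{\dag},\delta^{\dag})
=\frac{\Delta_{I}(\gamma_{1}^{\dag},\delta^{\dag})}{\Delta_{I}(\gamma_{1},\delta)}\,
\frac{\Delta_{II}(\gamma_{1},\delta)}{\Delta_{II}(\gamma_{1}^{\dag},\delta^{\dag})}\,
\Delta_{III}(\gamma_{1},\delta;\gamma_{1}^{\dag},\delta^{\dag})^{-1}
\]
and the analogous expression for $\Delta_{D}(\gamma_{1},\delta;\gamma_{1}^{\dag},\delta^{\dag})$ with the roles of numerator and denominator swapped in $\Delta_{I}$ and $\Delta_{II}$ and with $\Delta_{III}$ appearing without the inverse, I would simply multiply the two expressions and observe that every pair of matching terms cancels.

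There is essentially no obstacle, because the choices of $a$-data, $\chi$-data and toral data can (and must) be made to be the same on both sides of the claimed identity, so the individual terms really do cancel; this is the same point that underlies (4.2.2) in the absolute statement. The only thing to verify is that the relative $\Delta_{III}^{\ast}$ built from the inverted pairing $\langle-,-\rangle_{\ast}$ of (2.2) is indeed the pointwise inverse of the relative $\Delta_{III}$ built from $\langle-,-\rangle$, which is immediate because both pairings differ only by inversion of the scalar value. With these observations in place the lemma follows from one line of multiplication; no deeper input beyond the definitions in Section 4 and the relation (2.2) is needed.
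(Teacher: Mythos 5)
Your proposal is correct and is essentially the paper's own argument: the lemma is just a restatement of the identity $\Delta_D=(\Delta')^{-1}$ established in (4.2.2), which in turn follows immediately by comparing the definitions $\Delta'=\Delta_I^{\ast}\Delta_{II}\Delta_{III}^{\ast}$ and $\Delta_D=\Delta_I\Delta_{II}^{-1}\Delta_{III}$ term by term (with the same auxiliary data fixed on both sides). The extra care you take in writing out the relative form and checking that the inverted pairing gives the pointwise inverse of $\Delta_{III}$ is sound but not a departure from the paper's reasoning.
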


Recall from Section 2 that $\mathfrak{e}_{z}$ is our notation for a set of
endoscopic data supplemented by the choice of a $z$-pair.

\begin{lemma}
For all very regular related pairs $(\gamma_{1},\delta)$, $(\gamma_{1}^{\dag
},\delta^{\dag})$ we have
\[
\left\vert \Delta_{D}(\gamma_{1},\delta;\gamma_{1}^{\dag},\delta^{\dag
})\right\vert =\left\vert \Delta^{\prime}(\gamma_{1},\delta;\gamma_{1}^{\dag
},\delta^{\dag})\right\vert =1
\]
provided $\mathfrak{e}_{z}$ is bounded.
\end{lemma}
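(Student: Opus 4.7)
The plan is to collapse both equalities into one. By Lemma 5.0.1 we have $\Delta_{D}=(\Delta^{\prime})^{-1}$, so $|\Delta_{D}|=|\Delta^{\prime}|^{-1}$, and the whole statement reduces to showing $|\Delta^{\prime}|=1$.

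I would then use the formula $\Delta^{\prime}=\Delta_{I}^{\ast}\Delta_{III}^{\ast}\Delta_{II}$ from (4.1.1) and bound each factor separately. First, $\Delta_{I}$ is defined via the Tate--Nakayama pairing, which lands in roots of unity over any local field of characteristic zero, so $|\Delta_{I}^{\ast}|=1$ without any hypothesis. Next, for $\Delta_{II}$, I would invoke the $\chi$-data independence of $\Delta^{\prime}$ (see the Remark following (4.1.3)) to replace the chosen $\chi$-data by \emph{unitary} $\chi$-data $\chi=\{\chi_{\alpha}\}$, i.e.\ each $\chi_{\alpha}$ having image in $U(1)$; such a choice always exists. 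Then the product formula defining $\Delta_{II}$ as a product of values of the $\chi_{\alpha}$ immediately gives $|\Delta_{II}|=1$.

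The interesting step is $|\Delta_{III}^{\ast}|=1$, and this is where the boundedness assumption enters. Recall that $\Delta_{III}$ is computed by applying the pairing (A.3.12) of \cite{KS99} to a hypercohomology class built from the endoscopic datum $s$, the section $c:W_{F}\rightarrow\mathcal{H}$, and the $\chi$-data. By the compatibilities (2.2.1), (2.2.2) recorded in Section 2, this pairing decomposes into a classical Langlands pairing of the form $\langle u,\widehat{i}(\mathbf{z})\rangle$ (with $\mathbf{z}$ the relevant class and $\widehat{i}(\mathbf{z})\in H^{1}(W_{F},\widehat{T})$) and a Tate--Nakayama pairing; the latter is always in $U(1)$, and the former lies in $U(1)$ precisely when $\widehat{i}(\mathbf{z})$ is a bounded Langlands parameter of the torus.

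The main obstacle is therefore checking that $\widehat{i}(\mathbf{z})$ is bounded. Here the assumption that $\mathfrak{e}_{z}$ is bounded contributes the boundedness of the parameters $\xi\circ c$ and $\xi_{1}\circ c$, so the $W_{F}$-cocycle built from $c$ takes values in a compact subgroup of $\mathcal{H}$ modulo the central data. The unitary choice of $\chi$-data ensures that the admissible L-embedding ${}^{L}T\rightarrow{}^{L}G$ (and the corresponding one into ${}^{L}H_{1}$) transport these compact images to compact subgroups of $\widehat{T}$; the datum $s$ is a fixed semisimple element entering only the Tate--Nakayama component and so cannot break boundedness of $\widehat{i}(\mathbf{z})$. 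Tracking the construction of $\mathbf{z}$ from $c$, $s$, and the L-embedding, one concludes that $\widehat{i}(\mathbf{z})$ is a bounded class in $H^{1}(W_{F},\widehat{T})$, hence $|\Delta_{III}^{\ast}|=1$, completing the proof.
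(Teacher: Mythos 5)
Your overall strategy matches the paper's: reduce to $|\Delta'|=1$ via Lemma 5.0.1, handle $\Delta_{I}^{\ast}$ by the Tate--Nakayama pairing, and handle $\Delta_{II}$ by choosing unitary $\chi$-data (you should note that the independence of $\Delta'$ from $\chi$-data is what licenses this replacement). Those parts are fine.

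The gap is in the treatment of $\Delta_{III}^{\ast}$. You assert that the hypercohomology pairing of (A.3.12) ``decomposes into a classical Langlands pairing and a Tate--Nakayama pairing'' by the compatibilities (2.2.1), (2.2.2). But those compatibilities are not a general decomposition of the pairing; each one holds only on a special subgroup, namely when the first argument lies in the image of $j$ (for (2.2.1)/(2.2.3)) or when the second argument lies in the image of $\widehat{j}$ (for (2.2.2)/(2.2.4)). In the twisted case the differential $f$ in the complex $T\to U$ is nontrivial, and there is no reason a priori that the class entering $\Delta_{III}$ is of the form $j(u)$. The paper's proof addresses exactly this point: it observes that it suffices to prove the unitarity for a sufficiently high power of the pairing, and that after raising to such a power the first argument does lie in the image of $j$, at which point (2.2.1) and (2.2.3) apply and the boundedness of the cocycle $a_{T}(w)$, together with the second part of Theorem~1 of \cite{La97}, gives a value of absolute value one. (The paper also notes the alternative of re-running the construction of (A.3.12) in \cite{KS99} with $\mathbb{C}^{\times}$ replaced by the unit circle, \`a la \cite{La97}.) Your last paragraph gestures at ``tracking the construction of $\mathbf{z}$'' but does not supply this reduction, and without it the appeal to the compatibility rules is unjustified. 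You also omit the citation to \cite{La97} that underlies the claim that a bounded Langlands parameter pairs to $U(1)$; this is the actual engine of the argument, not a formality.

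A smaller point: the paper first carries out the standard case, where $\Delta_{III}$ splits cleanly as $\Delta_{III_{1}}\Delta_{III_{2},D}$ and the Langlands and Tate--Nakayama pairings genuinely separate; this both motivates and is used in the reduction for the twisted case. Skipping it, as you do, removes the scaffolding that makes the ``sufficiently high power'' step transparent.
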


\begin{proof}
Consider first the case of standard endoscopy since the argument here points
the way to the general case. Since $\Delta_{I}$ and $\Delta_{III_{1}}$ are
defined by Tate-Nakayama pairings we have $\left\vert \Delta_{I}\right\vert
=\left\vert \Delta_{III_{1}}\right\vert =1$. A check of definitions
\cite{LS87} shows that we may choose the quasi-characters of $\chi$-data to be
unitary. Then $\left\vert \Delta_{II}\right\vert =1.$ It remains to consider
$\Delta_{III_{2}}$. Now because the $\chi$-data are unitary and $\mathfrak{e}%
_{z}$ is assumed bounded, the cocycle $a_{2}(\chi)$ of (3.1) is bounded;
recall this is the cocycle $a_{T}(w)$ constructed on p. 45 of \cite{KS99}.
Then we apply the second part of Theorem 1 of \cite{La97} to conclude that
$\left\vert \Delta_{III_{2}}\right\vert =1,$ and so we are done in the
standard case.

Turning to the general case, we have then only to check that $\left\vert
\Delta_{III}\right\vert =1$ given that $\mathfrak{e}_{z}$ is bounded and the
$\chi$-data are unitary. We could verify an analogue of Langlands' result for
the hypercohomology pairing of (2.2) from \cite{KS99}. This is straightforward
since the arguments of \cite{KS99} are based on those of Langlands: we replace
$\mathbb{C}^{\times}$ by the unit circle $\mathbb{C}_{u}^{\times}$ (notation
of \cite{La97}) at certain steps in the argument. Alternatively, we may return
directly to the standard case by observing that it is enough to prove this for
a sufficiently high power of the pairing. Then it is easy to check in our
application that we may assume that, in the notation of Section 2.2, the first
term in the pairing is in the image of $j. $ We have already noted that the
cocycle $a_{T}(w)$ is bounded. Now the compatibility rules (2.2.1) and (2.2.3)
complete the argument.
\end{proof}

\begin{corollary}
For all very regular related pairs $(\gamma_{1},\delta)$, $(\gamma_{1}^{\dag
},\delta^{\dag})$ we have
\[
\Delta_{D}(\gamma_{1},\delta;\gamma_{1}^{\dag},\delta^{\dag})=\overline
{\Delta^{\prime}(\gamma_{1},\delta;\gamma_{1}^{\dag},\delta^{\dag})}%
\]
provided $\mathfrak{e}_{z}$ is bounded.
\end{corollary}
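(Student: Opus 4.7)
The plan is to simply combine the two preceding lemmas. Lemma 5.0.1 gives the identity $\Delta_D(\gamma_1,\delta;\gamma_1^\dag,\delta^\dag) = \Delta'(\gamma_1,\delta;\gamma_1^\dag,\delta^\dag)^{-1}$ for every very regular related pair, with no boundedness assumption. Lemma 5.0.2, assuming $\mathfrak{e}_z$ is bounded, gives $|\Delta'(\gamma_1,\delta;\gamma_1^\dag,\delta^\dag)| = 1$.

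The key observation is that for any nonzero complex number $z$ with $|z|=1$, one has $z^{-1} = \overline{z}$. Thus, under the boundedness hypothesis, $\Delta'(\gamma_1,\delta;\gamma_1^\dag,\delta^\dag)^{-1} = \overline{\Delta'(\gamma_1,\delta;\gamma_1^\dag,\delta^\dag)}$, and substituting into the identity of Lemma 5.0.1 yields the claim.

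There is no real obstacle: the boundedness hypothesis of the corollary is exactly what is needed to invoke Lemma 5.0.2, and Lemma 5.0.1 supplies the inversion relation without further assumptions. The entire proof is a one-line combination of the two lemmas, so no separate case analysis (standard versus twisted endoscopy) is needed at this stage — it has already been handled inside Lemma 5.0.2.
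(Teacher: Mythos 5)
Your proof is correct and is exactly the intended argument: the paper presents this as a corollary of Lemmas 5.0.1 and 5.0.2 with no separate proof, precisely because it follows from $\Delta_D = (\Delta')^{-1}$, $|\Delta'| = 1$ under boundedness, and the elementary fact that $z^{-1} = \overline{z}$ when $|z| = 1$. Nothing is missing.
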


\section{\textbf{Absolute transfer factors}}

\subsection{Normalization in general}

To reduce the notational burden, we assume the following for the rest of this paper.

\begin{itemize}
\item \textit{The supplemented endoscopic data} $\mathfrak{e}_{z}$ \textit{are
bounded.}
\end{itemize}

\begin{flushleft}
Adjustments for the general case are minor.
\end{flushleft}

Just for this subsection, we write the relative factors $\Delta^{\prime}$ and
$\Delta_{D}$ as $\Delta_{rel}^{\prime}$ and $\Delta_{D,rel}$. By an
\textit{absolute} transfer factor $\Delta^{\prime}$ we mean a complex-valued
function $\Delta^{\prime}$ on all very regular pairs $(\gamma_{1},\delta)$
with the following two properties: $\Delta^{\prime}(\gamma_{1},\delta)\neq0$
if and only if the pair $(\gamma_{1},\delta)$ is related, and
\begin{equation}
\Delta^{\prime}(\gamma_{1},\delta)\diagup\Delta^{\prime}(\gamma_{1}^{\dag
},\delta^{\dag})=\Delta_{rel}^{\prime}(\gamma_{1},\delta;\gamma_{1}^{\dag
},\delta^{\dag})
\end{equation}
whenever each pair $(\gamma_{1},\delta),(\gamma_{1}^{\dag},\delta^{\dag})$ is
related. We define absolute $\Delta_{D}$ analogously.

\begin{lemma}
Suppose that $\Delta^{\prime}$ is an absolute transfer factor for
$\Delta_{rel}^{\prime}.$ Then $(\Delta^{\prime})^{-1},$ $\overline
{\Delta^{\prime}}$ are absolute transfer factors for $\Delta_{D,rel}.$
\end{lemma}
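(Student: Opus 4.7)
The plan is to verify the two defining properties of an absolute transfer factor (nonvanishing on exactly the related pairs, and the ratio formula for related pairs) for each of $(\Delta')^{-1}$ and $\overline{\Delta'}$, using Lemma 4.0.1 and Corollary 4.0.3 as the main inputs. Since we have assumed $\mathfrak{e}_{z}$ is bounded for the remainder of the paper, Corollary 4.0.3 is available.

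First, for $(\Delta')^{-1}$: the function $(\Delta')^{-1}$ is well-defined precisely on the locus where $\Delta'$ is nonzero, which is exactly the set of very regular related pairs; and clearly $(\Delta')^{-1}(\gamma_{1},\delta)\neq 0$ there. For the ratio, I would simply compute
\[
\frac{(\Delta')^{-1}(\gamma_{1},\delta)}{(\Delta')^{-1}(\gamma_{1}^{\dag},\delta^{\dag})} = \left(\frac{\Delta'(\gamma_{1},\delta)}{\Delta'(\gamma_{1}^{\dag},\delta^{\dag})}\right)^{-1} = \Delta'_{rel}(\gamma_{1},\delta;\gamma_{1}^{\dag},\delta^{\dag})^{-1},
\]
and then apply Lemma 4.0.1 to identify the right-hand side with $\Delta_{D,rel}(\gamma_{1},\delta;\gamma_{1}^{\dag},\delta^{\dag})$.

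Second, for $\overline{\Delta'}$: again the zero set is unchanged under complex conjugation, so the nonvanishing condition is immediate. For the ratio I compute
\[
\frac{\overline{\Delta'}(\gamma_{1},\delta)}{\overline{\Delta'}(\gamma_{1}^{\dag},\delta^{\dag})} = \overline{\frac{\Delta'(\gamma_{1},\delta)}{\Delta'(\gamma_{1}^{\dag},\delta^{\dag})}} = \overline{\Delta'_{rel}(\gamma_{1},\delta;\gamma_{1}^{\dag},\delta^{\dag})},
\]
and then invoke Corollary 4.0.3 (which requires boundedness of $\mathfrak{e}_{z}$, hence the standing assumption in Section 5) to replace this with $\Delta_{D,rel}(\gamma_{1},\delta;\gamma_{1}^{\dag},\delta^{\dag})$.

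There is no real obstacle; the lemma is essentially a packaging of Lemma 4.0.1 together with the unitarity of the relative factor at bounded endoscopic data. The only thing worth flagging is the role of the boundedness hypothesis: without it, Corollary 4.0.3 would fail and the conjugation statement for $\overline{\Delta'}$ would not be available, whereas the inversion statement for $(\Delta')^{-1}$ would still hold by Lemma 4.0.1 alone.
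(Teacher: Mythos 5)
Your proof is correct and follows the same route as the paper, which simply cites the relative-factor identities and observes the ratio formula; the paper records the conclusion as ``immediate from Lemma 5.0.1 and Corollary 5.0.3.'' (Note that the lemmas you intend are the paper's Lemma 5.0.1 and Corollary 5.0.3, not 4.0.1 and 4.0.3.) Your remark about the role of the boundedness hypothesis is also accurate and matches the paper's standing assumption in Section 6.
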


Here by $(\Delta^{\prime})^{-1}$ we mean the factor defined by $(\Delta
^{\prime})^{-1}(\gamma_{1},\delta)=\Delta^{\prime}(\gamma_{1},\delta)^{-1}$
for a very regular \textit{related} pair $(\gamma_{1},\delta)$ and by
$(\Delta^{\prime})^{-1}(\gamma_{1},\delta)=0$ for an unrelated pair.

\begin{proof}
This is immediate from Lemma 5.0.1 and Corollary 5.0.3.
\end{proof}

\begin{corollary}
Let $\Delta^{\prime}$, $\Delta_{D}$ be absolute transfer factors for
$\Delta_{rel}^{\prime}$, $\Delta_{D,rel}$ respectively. Then there exists
$c\in\mathbb{C}^{\times}$ such that
\begin{equation}
\Delta_{D}(\gamma_{1},\delta)=c\text{ }\overline{\Delta^{\prime}(\gamma
_{1},\delta)}%
\end{equation}
for all very regular pairs $(\gamma_{1},\delta).$
\end{corollary}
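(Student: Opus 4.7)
The plan is to combine Lemma 5.1.1 with the elementary observation that any two absolute transfer factors for the same relative transfer factor differ by a multiplicative constant in $\mathbb{C}^{\times}$. There is no serious technical obstacle; the work is just to chase the definitions, and the substantive content has already been placed in Corollary 5.0.3.

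First, by Lemma 5.1.1, the function $\overline{\Delta^{\prime}}$ is itself an absolute transfer factor for $\Delta_{D,rel}$. So both $\Delta_{D}$ and $\overline{\Delta^{\prime}}$ are absolute transfer factors for the same relative factor $\Delta_{D,rel}$. Fix any very regular related pair $(\gamma_{1}^{\dag},\delta^{\dag})$; by the nonvanishing clause in the definition of an absolute transfer factor, both $\Delta_{D}(\gamma_{1}^{\dag},\delta^{\dag})$ and $\overline{\Delta^{\prime}(\gamma_{1}^{\dag},\delta^{\dag})}$ are nonzero, so the quotient
\[
c := \Delta_{D}(\gamma_{1}^{\dag},\delta^{\dag}) \big/ \overline{\Delta^{\prime}(\gamma_{1}^{\dag},\delta^{\dag})}
\]
is a well-defined element of $\mathbb{C}^{\times}$.

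Next, I would check that the identity $\Delta_{D}(\gamma_{1},\delta) = c\,\overline{\Delta^{\prime}(\gamma_{1},\delta)}$ holds on every very regular pair. For an unrelated pair both sides vanish by the first defining property of an absolute transfer factor. For a related pair $(\gamma_{1},\delta)$, apply the relative transformation rule (5.1.1) to both $\Delta_{D}$ and $\overline{\Delta^{\prime}}$, using the reference pair $(\gamma_{1}^{\dag},\delta^{\dag})$, to get
\[
\frac{\Delta_{D}(\gamma_{1},\delta)}{\overline{\Delta^{\prime}(\gamma_{1},\delta)}} = \frac{\Delta_{D,rel}(\gamma_{1},\delta;\gamma_{1}^{\dag},\delta^{\dag})}{\overline{\Delta_{rel}^{\prime}(\gamma_{1},\delta;\gamma_{1}^{\dag},\delta^{\dag})}} \cdot \frac{\Delta_{D}(\gamma_{1}^{\dag},\delta^{\dag})}{\overline{\Delta^{\prime}(\gamma_{1}^{\dag},\delta^{\dag})}}.
\]
By Corollary 5.0.3 the first factor on the right equals $1$, and the second factor is $c$. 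This gives the claimed equality and completes the argument.
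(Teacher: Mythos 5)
Your proof is correct and follows exactly the route the paper intends: Lemma 6.1.1 (which you cite as ``Lemma 5.1.1'') shows that $\overline{\Delta^{\prime}}$ is an absolute transfer factor for $\Delta_{D,rel}$, and since any two absolute transfer factors for the same relative factor differ by a nonzero multiplicative constant, the corollary follows. The only issue is the reference labels (``5.1.1'' should be 6.1.1, and the equation you call ``(5.1.1)'' is the defining relation (6.1.1) for absolute factors), but the content is unambiguous and the argument is the paper's own.
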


There are significant cases where $c\neq1.$ Examples are provided by the
Whittaker normalization when we define absolute $\Delta^{\prime}$, $\Delta
_{D}$ using the \textit{same} Whittaker data for each; see Lemma 6.3.1 below.

\subsection{Quasi-split setting}

Assume that $G$ is quasi-split over $F$ and choose an $F$-splitting $spl.$ We
start with the standard case. There are, of course, absolute versions of
$\Delta_{I},$ $\Delta_{II}$ and $\Delta_{III_{2}}.$ This is true for any $G,$
and absolute $\Delta_{I}$ depends additionally on the choice of $spl$ (see 3.2
in \cite{LS87}). Just in our present quasi-split setting, an absolute version
of $\Delta_{III_{1}}$ is defined on p. 248 of \cite{LS87}, and%
\begin{equation}
\Delta_{0}:=\Delta_{I}\Delta_{II}\Delta_{III_{1}}\Delta_{III_{2}}%
\end{equation}
is a transfer factor in the usual sense:
\begin{equation}
\Delta_{0}(\gamma_{1},\delta)\diagup\Delta_{0}(\gamma_{1}^{\dagger}%
,\delta^{\dagger})=\Delta(\gamma_{1},\delta;\gamma_{1}^{\dagger}%
,\delta^{\dagger}),
\end{equation}
for all very regular related pairs $(\gamma_{1},\delta),(\gamma_{1}^{\dagger
},\delta^{\dagger}).$ We may apply the $^{\prime}$-operation to $\Delta_{0}$
to obtain another factor
\begin{equation}
\Delta_{0}^{\prime}:=(\Delta_{I})^{-1}\Delta_{II}(\Delta_{III_{1}})^{-1}%
\Delta_{III_{2}}.
\end{equation}
Similarly, we define%
\begin{equation}
\Delta_{0,D}:=\Delta_{I}(\Delta_{II})^{-1}\Delta_{III_{1}}\Delta_{III_{2,D}}.
\end{equation}
This extends to twisted factors \cite{KS12}. Assume that the twisting
automorphism $\theta$ preserves $spl.$ Then we have the twisted factors%
\begin{equation}
\Delta_{0}^{\prime}:=(\Delta_{I})^{-1}\Delta_{II}(\Delta_{III})^{-1}%
\end{equation}
and%
\begin{equation}
\Delta_{0,D}:=\Delta_{I}(\Delta_{II})^{-1}\Delta_{III},
\end{equation}
where the absolute twisted factor $\Delta_{III}$ is that defined on p. 63 of
\cite{KS99}. It is clear then that
\begin{equation}
(\Delta_{0}^{\prime})^{-1}=\Delta_{0,D},
\end{equation}
provided we define $\Delta_{I}$ in terms of the same $spl\ $in each case.

\subsection{Whittaker normalization}

We continue in the setting of (6.2). The Whittaker normalization was
introduced in Sections 5.2 - 5.4 of \cite{KS99}; see also (5.5) in
\cite{KS12}. It shifts dependence from the $F$-splitting $spl$ preserved by
$\theta$ to a set $\lambda$ of Whittaker data preserved by $\theta$. This is
done by inserting a certain epsilon factor $\varepsilon_{L}(V,\psi)$ \cite[p.
65]{KS99} in the factors $\Delta_{0}^{\prime}$ and $\Delta_{0,D}$. Thus%
\begin{equation}
\Delta_{\lambda}^{\prime}:=\varepsilon_{L}(V,\psi)\Delta_{0}^{\prime
}=\varepsilon_{L}(V,\psi)(\Delta_{I})^{-1}\Delta_{II}(\Delta_{III})^{-1}%
\end{equation}
and%

\begin{equation}
\Delta_{\lambda,D}:=\varepsilon_{L}(V,\psi)\Delta_{0,D}=\varepsilon_{L}%
(V,\psi)\Delta_{I}(\Delta_{II})^{-1}\Delta_{III}.
\end{equation}
A check of the arguments on pp. 65-66 of \cite{KS99} shows that $\Delta
_{\lambda}^{\prime}$ as well as $\Delta_{\lambda,D}$ depends only on the
choice of $\lambda.$

\begin{lemma}%
\begin{equation}
\Delta_{\lambda,D}(\gamma_{1},\delta)=c\text{ }\overline{\Delta_{\lambda
}^{\prime}(\gamma_{1},\delta)}%
\end{equation}
for all very regular pairs $(\gamma_{1},\delta),$ where%
\begin{equation}
c=(det(V))(-1).
\end{equation}

\end{lemma}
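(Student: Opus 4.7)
The plan is to reduce the statement to a pure epsilon-factor computation, using (6.2.8) to cancel the geometric part of the factor and isolating the $\varepsilon_L(V,\psi)$ contribution.

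First, I would combine the Whittaker definitions $\Delta_\lambda^{\prime} = \varepsilon_L(V,\psi)\Delta_0^{\prime}$ and $\Delta_{\lambda,D} = \varepsilon_L(V,\psi)\Delta_{0,D}$ with the identity $\Delta_{0,D}=(\Delta_0^{\prime})^{-1}$ of (6.2.8). Under the standing boundedness assumption on $\mathfrak{e}_z$ (and with unitary $\chi$-data, which we may always arrange), each of the absolute terms building $\Delta_0^{\prime}=(\Delta_I)^{-1}\Delta_{II}(\Delta_{III})^{-1}$ is unitary at every very regular related pair: $|\Delta_I|=1$ by its Tate--Nakayama definition, $|\Delta_{II}|=1$ by unitarity of the $\chi$-data, and $|\Delta_{III}|=1$ by the hypercohomological argument recorded in the proof of Lemma 5.0.2. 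Hence $|\Delta_0^{\prime}(\gamma_1,\delta)|=1$ on the support, so $(\Delta_0^{\prime})^{-1}=\overline{\Delta_0^{\prime}}$ and therefore $\Delta_{0,D}=\overline{\Delta_0^{\prime}}$. Substituting back,
\[
\Delta_{\lambda,D}(\gamma_1,\delta)=\varepsilon_L(V,\psi)\,\overline{\Delta_0^{\prime}(\gamma_1,\delta)},\qquad \overline{\Delta_\lambda^{\prime}(\gamma_1,\delta)}=\overline{\varepsilon_L(V,\psi)}\,\overline{\Delta_0^{\prime}(\gamma_1,\delta)},
\]
so the constant $c$ guaranteed by Corollary 6.1.2 equals $c=\varepsilon_L(V,\psi)/\overline{\varepsilon_L(V,\psi)}$.

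To evaluate this ratio I would invoke standard properties of Langlands' $\varepsilon_L$. The representation $V$ entering the Whittaker normalization on p.\,65 of \cite{KS99} is unitary and self-dual ($V\cong V^\vee$) by its construction, so complex conjugation gives $\overline{\varepsilon_L(V,\psi)}=\varepsilon_L(V^\vee,\bar\psi)$. Combining this with $\bar\psi=\psi_{-1}$ and the scaling identity $\varepsilon_L(V^\vee,\psi_{-1})=\det(V^\vee)(-1)\,\varepsilon_L(V^\vee,\psi)$, applying $V\cong V^\vee$, and noting $\det(V^\vee)(-1)=\det(V)(-1)$ (both lie in $\{\pm 1\}$), one arrives at $\overline{\varepsilon_L(V,\psi)}=\det(V)(-1)\,\varepsilon_L(V,\psi)$. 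Since $\det(V)(-1)^2=1$, inverting yields $c=\det(V)(-1)$.

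The main obstacle is the last step: correctly invoking the self-duality and unitarity of the specific $V$ used in the Whittaker normalization, and pinning down the sign conventions in the conjugation/scaling identities for $\varepsilon_L$ so that the $-1$'s propagate as stated. Everything else is a direct appeal to the already-established identity (6.2.8) and to the unitarity of the absolute transfer factor in the bounded case.
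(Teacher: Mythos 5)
Your argument is correct and takes essentially the same route as the paper: both proofs cancel the common $\Delta_0'$ part via (6.2.8) together with the unitarity $|\Delta_0'|=1$ coming from boundedness, reducing the claim to the evaluation $c=\varepsilon_L(V,\psi)/\overline{\varepsilon_L(V,\psi)}=\varepsilon_L(V,\psi)^2$. The only cosmetic difference is that the paper simply cites $\varepsilon_L(V,\psi)^2=(\det V)(-1)$ from p.~65 of \cite{KS99} (a consequence of $V$ being defined over $\mathbb{R}$), while you re-derive the equivalent identity $\overline{\varepsilon_L(V,\psi)}=(\det V)(-1)\,\varepsilon_L(V,\psi)$ from the conjugation and scaling properties of $\varepsilon_L$ together with unitarity and self-duality of $V$.
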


Here we regard $det(V)$ as a character on $F^{\times}$; the choice of local
classfield theory isomorphism for this does not matter.

\begin{proof}
Comparing the right sides of (6.3.1) and (6.3.2), we see that this amounts to
a property of $\varepsilon_{L}(V,\psi)$ given on p. 65 of \cite{KS99}, namely
$(\varepsilon_{L}(V,\psi))^{2}=(det(V))(-1).$ The property comes from the fact
that the chosen $V$ is defined over $\mathbb{R}$.
\end{proof}

Next we replace $\lambda$ by $\overline{\lambda}=\lambda^{-1}$ on the left
side of (6.3.3). Here we are using $\lambda$ as notation both for Whittaker
data, namely a $G(F)$-conjugacy class of Whittaker characters, and for a
representative character.

\begin{lemma}%
\begin{equation}
\Delta_{\overline{\lambda},D}(\gamma_{1},\delta)=\overline{\Delta_{\lambda
}^{\prime}(\gamma_{1},\delta)}%
\end{equation}
for all very regular pairs $(\gamma_{1},\delta).$
\end{lemma}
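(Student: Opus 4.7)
The plan is to derive the identity from Lemma 6.3.1 by tracking how both sides respond to the substitution $\lambda\mapsto\overline{\lambda}$. As a first step, I would apply Lemma 6.3.1 with $\overline{\lambda}$ in place of $\lambda$ to obtain
\[
\Delta_{\overline{\lambda},D}(\gamma_{1},\delta)\;=\;c\cdot\overline{\Delta_{\overline{\lambda}}^{\prime}(\gamma_{1},\delta)},\qquad c=(det(V))(-1).
\]
This works because Lemma 6.3.1 applies for \emph{any} choice of Whittaker data preserved by $\theta$, and $\overline{\lambda}$ is another such choice.

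The second step is to relate $\Delta_{\overline{\lambda}}^{\prime}$ back to $\Delta_\lambda^{\prime}$. Here I would use the fact that $\lambda$ is built from the fixed $F$-splitting $spl$ together with an additive character $\psi$ of $F$, and the replacement $\lambda\mapsto\lambda^{-1}$ corresponds to $\psi\mapsto\psi^{-1}$ with $spl$ unchanged. Thus $\Delta_{0}^{\prime}$ (which depends only on $spl$) is unaffected, and the only change occurs in the epsilon factor. The standard rescaling identity $\varepsilon_{L}(V,\psi_{a})=(det(V))(a)\cdot\varepsilon_{L}(V,\psi)$, applied at $a=-1$, gives $\varepsilon_{L}(V,\overline{\psi})=c\cdot\varepsilon_{L}(V,\psi)$, and so $\Delta_{\overline{\lambda}}^{\prime}=c\cdot\Delta_{\lambda}^{\prime}$.

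Combining the two steps, and using that $V$ is defined over $\mathbb{R}$ so that $c$ is real and $c^{2}=1$, I would conclude
\[
\Delta_{\overline{\lambda},D}(\gamma_{1},\delta)\;=\;c\cdot\overline{c\,\Delta_{\lambda}^{\prime}(\gamma_{1},\delta)}\;=\;c^{2}\,\overline{\Delta_{\lambda}^{\prime}(\gamma_{1},\delta)}\;=\;\overline{\Delta_{\lambda}^{\prime}(\gamma_{1},\delta)},
\]
which is the claimed identity. There is no serious obstacle: the only point requiring care is making explicit the dependence of the Whittaker datum on the auxiliary pair $(spl,\psi)$, so that $\lambda\mapsto\overline{\lambda}$ is unambiguously translated into $\psi\mapsto\overline{\psi}$. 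Once that is in place, the argument reduces to Lemma 6.3.1 together with the epsilon-factor rescaling identity, both of which are already available in the paper (the latter having been invoked in the very proof of Lemma 6.3.1 via $\varepsilon_{L}(V,\psi)^{2}=(det(V))(-1)$).
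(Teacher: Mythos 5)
Your proof is correct and takes essentially the same route as the paper's: both rely on the observation that $\lambda\mapsto\overline{\lambda}$ corresponds to $\psi\mapsto\psi_{-1}$ with $spl$ fixed, the consequent rescaling of $\varepsilon_{L}(V,\psi)$ by $c=(det(V))(-1)$, and Lemma 6.3.1. You apply Lemma 6.3.1 at $\overline{\lambda}$ first and then rescale $\Delta_{\overline{\lambda}}^{\prime}$, whereas the paper rescales $\Delta_{\lambda,D}$ to $\Delta_{\overline{\lambda},D}$ first and then invokes Lemma 6.3.1 at $\lambda$; this is just a reordering of the same two steps.
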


\begin{proof}
Again the needed property is found on p. 65 of \cite{KS99}. With
representative $\lambda$ defined using the $F$-splitting $spl$ and character
$\psi$, replacing $\lambda$ by $\overline{\lambda}$ amounts to replacing
$\psi$ by $\psi_{-1},$ and then $\Delta_{\overline{\lambda},D}(\gamma
_{1},\delta)=c$ $\Delta_{\lambda,D}(\gamma_{1},\delta),$ with $c$ as in
(6.3.4). The lemma now follows.
\end{proof}

\subsection{Consequences for transfer of orbital integrals}

Suppose $\Delta^{\prime}$ and $\Delta_{D}$ are arbitrary normalizations,
\textit{i.e.} are absolute transfer factors for $\Delta_{rel}^{\prime}$,
$\Delta_{D,rel}$ respectively. Assume $\Delta^{\prime}$-transfer exists. Thus
if $f$ is a test function on $G(F)$, \textit{i.e.} $f\in C_{c}^{\infty
}(G(F)),$ then there exists a test function $f_{1}$ on $H_{1}(F)$,
\textit{i.e.} $f_{1}\in C_{c}^{\infty}(H_{1}(F),\lambda_{H_{1}}),$ with
$\Delta^{\prime}$-matching orbital integrals:
\begin{equation}
SO(\gamma_{1},f_{1})=\sum_{\{\delta\}}\text{ }\Delta^{\prime}(\gamma
_{1},\delta)\text{ }O^{\theta,\varpi}(\delta,f)
\end{equation}
for all strongly $G$-regular elements $\gamma_{1}$ in $H_{1}(F).$ Here we
follow the usual conventions for orbital integrals \cite{Wa08}, and the sum is
over $\theta$-conjugacy classes $\{\delta\}$ of strongly $\theta$-regular
elements $\delta$ in $G(F)$.

Recall Corollary 6.1.2 above and notice that the function
\begin{equation}
f_{1,D}:=c\text{ }\overline{(\overline{f})_{1}}%
\end{equation}
lies in $C_{c}^{\infty}(H_{1}(F),\lambda_{H_{1},D}).$ Then, applying complex
conjugation to (6.4.1) with $\overline{f}$ in place of $f$, we see that $f$
and $f_{1,D}$ have $\Delta_{D}$-matching orbital integrals:
\begin{equation}
SO(\gamma_{1},f_{1,D})=\sum_{\{\delta\}}\text{ }\Delta_{D}(\gamma_{1}%
,\delta)\text{ }O^{\theta,\varpi_{D}}(\delta,f)
\end{equation}
for all strongly $G$-regular elements $\gamma_{1}$ in $H_{1}(F).$ Thus
$\Delta_{D}$-transfer exists. Conversely, assume that $\Delta_{D}$-transfer
exists. Then we argue similarly to recover the existence of $\Delta^{\prime}$-transfer.

\textit{Remark:} See \cite{Ka13} for an alternate approach to the case of
Whittaker normalization for quasi-split groups. There is a further comment on
this in (7.4) when we come to the dual spectral transfer for real groups. We
will see that our approach with complex conjugation is sufficient for the
tempered spectrum, both in the Whittaker case and in general, because of
unitarity and the simple relations of Section 5. We expect other applications also.

\textit{Remark:} The arguments of \cite{Sh12} can also be used to prove
\textit{directly} a $\Delta_{D}$-transfer of orbital integrals in the
archimedean case. First, notice that \cite{Sh12} proves transfer for the
factor $\Delta^{\prime}=\Delta_{I}^{\ast}\Delta_{II}\Delta_{III}^{\ast}.$
Since $\Delta_{I}=\Delta_{I}^{\ast}$ in the archimedean setting, proving
transfer for $\Delta_{D}$ then amounts to checking that $\Delta_{III}^{\ast}$
may be replaced by $\Delta_{III}^{\ast}(\chi^{-1})^{-1}$ in the limit formula
of the main lemma (Lemma 9.3). We may certainly invert both sides of that
limit formula without harm, and further it is clear from the definitions in
Section 3 of the paper that we may also invert the two sets of $\chi$-data
simultaneously. Thus we are done.

\section{\textbf{Archimedean case: spectral setting}}

From now on, $F=\mathbb{R}$ and we consider only standard endoscopy. In this
setting, the relative transfer factors $\Delta_{rel}^{\prime}$ and
$\Delta_{rel}$ are the same (recall (3.1) above) and so we may as well
normalize the absolute factors to be the same also. Thus we will use the
notation $\Delta$ from \cite{Sh10} interchangeably with $\Delta^{\prime}$.

\subsection{Statement of dual tempered transfer}

Our interest is in the tempered spectral transfer dual to (6.4.1), the
tempered spectral transfer dual to (6.4.3), and the precise relation between
the two.

We state the two dual transfer formulas as
\begin{equation}
St\text{-}Trace\text{ }\pi_{1}(f_{1})=\sum_{\pi}\text{ }\Delta^{\prime}%
(\pi_{1},\pi)\text{ }Trace\text{ }\pi(f)
\end{equation}
and%
\begin{equation}
St\text{-}Trace\text{ }\pi_{1,D}(f_{1,D})=\sum_{\pi}\text{ }\Delta_{D}%
(\pi_{1,D},\pi)\text{ }Trace\text{ }\pi(f).
\end{equation}
Here $\pi_{1},$ $\pi_{1,D}$ denote tempered irreducible admissible
representations of $H_{1}(\mathbb{R})$ for which the central subgroup
$Z_{1}(\mathbb{R})$ acts by the character $\lambda_{H_{1}},$ $\lambda
_{H_{1},D}$ respectively, and $\pi$ denotes an tempered irreducible admissible
representation of $G(\mathbb{R}).$ Notationally we identify a representation
with its isomorphism class. Also, we may as well limit our attention to
unitary representations on Hilbert spaces (and unitary isomorphism). For each
$\pi_{1}$ there will be only finitely many (isomorphism classes) $\pi$ such
that $\Delta^{\prime}(\pi_{1},\pi)$ is nonzero, and similarly for each
$\pi_{1,D}$ there will be only finitely many $\pi$ such that $\Delta_{D}%
(\pi_{1,D},\pi)$ is nonzero. This is by construction, although it may be
argued directly that this has to be the case. Further, we will see that such
$\pi$ form the packet, classical in the case of $\Delta^{\prime}$,
renormalized in the case of $\Delta_{D},$ predicted by Langlands'
functoriality principle.

The terms $St$-$Trace$ $\pi_{1},$ $St$-$Trace$ $\pi_{1,D}$ on the left sides
denote the stable trace over the packet of $\pi_{1},$ $\pi_{1,D}$
respectively. Each of these is a stable tempered distribution on
$H_{1}(\mathbb{R})$.

The transfer theorems state that for $f,f_{1}$ as in (6.4.1) and $f,f_{1,D}$
as in (6.4.3), the formulas (7.1.1) and (7.1.2) are true when $\Delta^{\prime
}(\pi_{1},\pi),$ $\Delta_{D}(\pi_{1,D},\pi)$ are as we specify. The factor
$\Delta^{\prime}(\pi_{1},\pi)$ is the factor $\Delta(\pi_{1},\pi)$ from
\cite{Sh10} and the associated transfer theorem has been proved there
following the method of \cite{Sh82}. We review briefly and build on this.

As on the geometric side, we first isolate and examine a \textit{very regular}
setting. To this end, we introduce the notion of \textit{very regular
(related) pair} for Langlands parameters.

\subsection{Very regular related pairs}

Suppose that $\varphi:W_{\mathbb{R}}\rightarrow$ $^{L}G=G^{\vee}\rtimes
W_{\mathbb{R}}$ \ is an admissible homomorphism relevant to $G,$ so that the
orbit of $\varphi$ under the conjugation action of $G^{\vee}$ is a Langlands
parameter for $G.$ We use the notation $\boldsymbol{\varphi}$ (boldface) for
the parameter when it is helpful to distinguish it from a representative. We
call $\varphi$ or $\boldsymbol{\varphi}$ \textit{regular} if the subgroup
$Cent(\varphi(\mathbb{C}^{\times}),G^{\vee})$ of $G^{\vee}$ of elements fixing
(pointwise) the image of $\mathbb{C}^{\times}$ is abelian. For the endoscopic
group $H_{1}$ we consider only parameters $\boldsymbol{\varphi}_{1}$ for which
composition with $^{L}H_{1}\rightarrow$ $^{L}Z_{1}$ produces the parameter
$\mathbf{b}$ of (4.2.5). Equivalently, $\boldsymbol{\varphi}_{1}$ has a
(\textit{good}) representative $\varphi_{1} $ with image in $\xi
_{1}(\mathcal{H}),$ as described in Section 2 of \cite{Sh10}. Recall that we
have:%
\begin{equation}%
\begin{array}
[c]{ccc}
&  & ^{L}H_{1}\\
& \overset{\xi_{1}}{\nearrow} & \\
\mathcal{H} &  & \\
& \underset{\xi\text{ }=\text{ }incl}{\searrow} & \\
&  & ^{L}G
\end{array}
\end{equation}
Since $H_{1}\rightarrow H$ is a $z$-extension, good $\varphi_{1}$ is unique up
to conjugation by elements of the subgroup $H^{\vee}$ of $\mathcal{H}.$ The
$G^{\vee}$-orbit of $\varphi^{\ast}=\xi$ $\circ$ $\xi_{1}^{-1}$ $\circ$
$\varphi_{1}$ defines a parameter $\boldsymbol{\varphi}^{\ast}$ for the
quasi-split form $G^{\ast}$ of $G$ (it need not be relevant to $G$). We say
$\boldsymbol{\varphi}_{1}$ is $G^{\vee}$-\textit{regular }if $Cent(\varphi
^{\ast}(\mathbb{C}^{\times}),G^{\vee})$ is abelian.

Assume $\boldsymbol{\varphi}_{1}$ is $G^{\vee}$-regular and
$\boldsymbol{\varphi}$ is a regular parameter for $G$. We call the pair
$(\boldsymbol{\varphi}_{1},\boldsymbol{\varphi})$ a \textit{very regular
pair}. We call $(\boldsymbol{\varphi}_{1},\boldsymbol{\varphi}),$ or
representatives $(\varphi_{1},\varphi),$ \textit{related} if
$\boldsymbol{\varphi}^{\ast}$ is relevant to $G$ and coincides with
$\boldsymbol{\varphi}\mathbf{.}$

\subsection{Parameters in explicit form}

Recall from Section 3 of \cite{La89} a description of essentially bounded
Langlands parameters. Throughout we assume a choice of splitting $spl^{\vee
}=(\mathcal{B},\mathcal{T},\{X_{\alpha^{\vee}}\})$ for $G^{\vee}$ preserved by
the action of the Galois group $\Gamma=\{1,\sigma\}$; the final results are
independent of this choice.

We start with those parameters that factor through no proper parabolic
subgroup of $^{L}G$, namely the parameters for essentially discrete series
representations, provided they exist. Each such parameter has an almost
canonical representative $\varphi=\varphi(\mu,\lambda)$ attached to
$spl^{\vee}$. Here $\mu$ is a uniquely determined element of $X_{\ast
}(\mathcal{T})\otimes\mathbb{C}$ that is integral and regular dominant for
$(\mathcal{B},\mathcal{T)}$ and $\lambda$ is a representative for a uniquely
determined coset in $X_{\ast}(\mathcal{T})\otimes\mathbb{C}$ of the submodule
$\mathcal{K}$ generated by $X_{\ast}(\mathcal{T})$ and the $(-1)$-eigenspace
of the automorphism $\overline{\sigma}$ to be defined shortly. The
\textit{almost canonical} refers to the fact that this representative is
unique only up to the action of $\mathcal{T}$ (see \cite{Sh10} for a slight
extension of the discussion in \cite{La89}).

In particular, for $\varphi=\varphi(\mu,\lambda),$ we have that $\varphi
(\mathbb{C}^{\times})$ is contained in $\mathcal{T}\times\mathbb{C}^{\times}$.
If $w$ has image $\sigma$ under $W_{\mathbb{R}}\rightarrow\Gamma$ then
$\varphi(w)$ normalizes $\mathcal{T}$ and its action on $\mathcal{T}$ is
independent of choice for $w$. We write it as $\overline{\sigma}$. Then
$\overline{\sigma}$ acts on $\mathcal{T}\cap G_{der}^{\vee} $ as inversion
$t\rightarrow t^{-1}.$ By construction,
\begin{equation}
\varphi(z)=z^{\mu}\overline{z}^{\overline{\sigma}\mu}\times z
\end{equation}
for $z\in\mathbb{C}^{\times}.$ Suppose $w^{2}=-1.$ Then $w$ has image $\sigma$
under $W_{\mathbb{R}}\rightarrow\Gamma$ and, by construction again,
\begin{equation}
\varphi(w)\in e^{2\pi i\lambda}G_{der}^{\vee}\times w.
\end{equation}
Here $e^{2\pi i\lambda}$ denotes the element of $\mathcal{T}$ satisfying
\begin{equation}
\lambda^{\vee}(e^{2\pi i\lambda})=e^{2\pi i\text{ }\left\langle \lambda^{\vee
},\text{ }\lambda\right\rangle }%
\end{equation}
for all $\lambda^{\vee}\in X^{\ast}(\mathcal{T}),$ where $\left\langle
-,-\right\rangle $ is the canonical pairing between $X^{\ast}(\mathcal{T})$
and $X_{\ast}(\mathcal{T}).$ Also, $G_{der}^{\vee}$ denotes the derived group
of $G^{\vee}.$

Boundedness of $\varphi$ is a condition on $\mu+\overline{\sigma}\mu$ which we
will impose to allow us to drop the adjective \textit{essentially} from our
discussion; it states that $\left\langle \lambda^{\vee},\mu+\overline{\sigma
}\mu\right\rangle $ is purely imaginary for all $\lambda^{\vee}\in X^{\ast
}(\mathcal{T})\otimes\mathbb{R}$ and forces unitarity of the character data of
the next subsection. Here we use \textit{essentially }as in Section 3 of
\cite{La89}. Thus a discrete series representation has unitary central
character and has matrix coefficients that are square-integrable on
$G(\mathbb{R})\diagup Z_{G}(\mathbb{R})$, where $Z_{G}$ denotes the center of
$G$. An essentially discrete series representation $\pi$ has no condition on
its central character, but $\pi$ then has the property that $\zeta\otimes\pi$
is a discrete series representation, for some quasicharacter $\zeta$ on
$G(\mathbb{R}).$ The \textit{essentially} case is thus no harder to handle,
for discrete series or more generally for tempered representations, but it
does add to the notational burden.

The boundedness condition is of course vacuous if $Z_{G}$ is anisotropic, and
$\overline{\sigma}$ then acts on the entire torus $\mathcal{T}$ as inversion.
In that case we may take $\lambda=0$ and then ignore it. More precisely, in
general we have that
\begin{equation}
\frac{1}{2}(\mu-\overline{\sigma}\mu)-\iota\equiv\lambda+\overline{\sigma
}\lambda\operatorname{mod}X_{\ast}(\mathcal{T}),
\end{equation}
where $\iota$ is one-half the sum of the coroots of $(\mathcal{B}%
,\mathcal{T})$. This is the critical Lemma 3.2 of \cite{La89}. In the case
that the center of $G$ is anisotropic we may replace (7.3.4) by the simpler
rationality condition
\begin{equation}
\mu-\iota\in X_{\ast}(\mathcal{T}).
\end{equation}

Now to drop the assumption that bounded $\boldsymbol{\varphi}$ factors through
no proper parabolic subgroup of $^{L}G,$ we follow the argument of \cite{La89}
to attach a parabolic subgroup of $^{L}G$ standard relative to $(\mathcal{B}%
,\mathcal{T)}$ through which a representative $\varphi$ factors minimally.
Then if $^{L}M$ is the corresponding standard Levi subgroup we may assume that
$\varphi$ has image in $^{L}M$ but factors through no proper parabolic
subgroup of $^{L}M$. Using the subsplitting $spl_{M}^{\vee} $ of $spl^{\vee}$
(defined in the obvious way), we have a description as above, with
$\overline{\sigma}=\overline{\sigma}_{G}$ replaced by $\overline{\sigma}_{M}$
and $\iota=\iota_{G}$ replaced by $\iota_{M}.$

\subsection{Attached packets (discrete series)}

The packet attached to a parameter $\varphi$ in the local Langlands
correspondence depends on the choice of an inner class of inner twists from
$G$ to the fixed quasi-split form $G^{\ast}.$ Thus we choose an inner twist
$\psi:G\rightarrow G^{\ast}$ and are free to adjust it by an inner
automorphism when convenient.

Throughout this subsection we assume that $\varphi$ is bounded and factors
through no proper parabolic subgroup of $^{L}G,$ and write $\Pi$ for the
discrete series packet attached via the classical local Langlands
correspondence of \cite{La89}. We recall how this is done. The existence of
$\overline{\sigma}$ ensures (more precisely, is equivalent to) the existence
of a maximal torus $T$ over $\mathbb{R}$ in $G$ that is anisotropic modulo the
center of $G.$ Then we adjust $\psi$ so that its restriction to $T$ is defined
over $\mathbb{R}$, and use $\psi$ together with toral data to transport
$\mu,\lambda$ to $X^{\ast}(T)\otimes\mathbb{C}$; $\overline{\sigma} $ becomes
$\sigma_{T}.$ Let $\Omega$ be the Weyl group of $T$ in $G.$ We have then a
well-defined collection $\{(\omega\mu-\iota,\lambda);\omega\in\Omega\}$ of
\textit{character data}, \textit{i.e.} one that is independent of the choice
of splittings and toral data, and that depends only on the inner class of
$\psi$. Informally, these character data provide well-defined character
formulas on the regular points of $T(\mathbb{R})$ along the lines of the
familiar Weyl character formula. Harish-Chandra's theorems on the discrete
series allow us to use this information to characterize the distributions
$Trace$ $\pi$ and $St$-$Trace$ $\pi$ for $\pi\in\Pi$ (see Section 3 of
\cite{La89}).

Assuming a choice of toral data, we may now identify $\pi\in\Pi$ by an element
$\omega$ of $\Omega$, or more precisely by a left coset of the subgroup
$\Omega_{\mathbb{R}}$ of elements realized in $G(\mathbb{R})$. We then write
$\pi=\pi(\omega)$; see Section 7b in \cite{Sh10} and below. As in the
definition of the geometric factors, for spectral transfer factors we choose
toral data, $a$-data and $\chi$-data to assemble terms and then observe that
factor itself is independent of these choices.

Before turning to $\Pi_{D},$ we review a critical step in the definition of
the classical packet $\Pi,$ namely forming characters on $T(\mathbb{R})$, to
be denoted $\Lambda_{(\omega\mu-\iota,\lambda)},$ from the character data
$\{(\omega\mu-\iota,\lambda);\omega\in\Omega\}.$ This is done in \cite{La89}
via an explicit form of the classical local Langlands correspondence for real
tori, as follows. The congruence%
\begin{equation}
\frac{1}{2}[(\omega\mu-\iota)-\sigma_{T}(\omega\mu-\iota)]\equiv\lambda
+\sigma_{T}\lambda\operatorname{mod}X^{\ast}(T),
\end{equation}
is a consequence of (7.3.4). Moreover, it is a necessary and sufficient
condition that the formula%
\begin{equation}
\Lambda_{(\omega\mu-\iota,\lambda)}(\exp H\text{ }\exp i\pi\lambda^{\vee
})=e^{\left\langle \omega\mu-\iota,H\right\rangle }\text{ }e^{2\pi
i\left\langle \lambda,\lambda^{\vee}\right\rangle }.
\end{equation}
yields a well-defined character $\Lambda_{(\omega\mu-\iota,\lambda)}$ on
$T(\mathbb{R})$. Here we have identified the Lie algebra $\mathfrak{t}$ of $T$
as $X_{\ast}(T)\otimes\mathbb{C}$ and the Lie algebra $\mathfrak{t}%
_{\mathbb{R}}$ of $T(\mathbb{R})$ as the real subspace of invariants for the
twisted action of $\sigma_{T}$ (meaning $\sigma$ acts on both components). The
element $H$ lies in $\mathfrak{t}_{\mathbb{R}}$ and $\lambda^{\vee}\in
X_{\ast}(T)$ is $\sigma_{T}$-invariant. These elements $\exp i\pi\lambda
^{\vee}$ form a finite subgroup $F$ of $T(\mathbb{R})$ such that
$T(\mathbb{R})=T(\mathbb{R})^{0}$ $F.$

Notice that if $T$ itself is anisotropic over $\mathbb{R}$, \textit{i.e.} the
center $Z_{G}$ of $G$ is anisotropic, then the group $T(\mathbb{R})$ is
connected and the character $\Lambda_{\omega\mu-\iota}$ is just the
restriction of the rational character $\omega\mu-\iota$ to $T(\mathbb{R}). $

In general, consider restriction to $Z_{G}(\mathbb{R})$ of the character
$\Lambda_{(\omega\mu-\iota,\lambda)}.$ Since the group $F$ is central we may
write $z\in Z_{G}(\mathbb{R})$ as $z=\exp H$ $\exp i\pi\lambda^{\vee}, $ with
$H\in\mathfrak{z}_{G,\mathbb{R}}$ and $\lambda^{\vee},$ as above, a
$\sigma_{T}$-invariant cocharacter of $T$. Then for each $\omega\in\Omega$,
\begin{equation}
\Lambda_{(\omega\mu-\iota,\lambda)}(z)=e^{\left\langle \mu,H\right\rangle
}\text{ }e^{2\pi i\left\langle \lambda,\lambda^{\vee}\right\rangle }.
\end{equation}
We rewrite (7.4.3) as $\Lambda_{(\mu,\lambda)}(z),\ $keeping in mind that in
general the character $\Lambda_{(\mu,\lambda)}$ is well-defined only on
$Z_{G}(\mathbb{R}).$ Then $\Lambda_{(\mu,\lambda)}$ is the central character
of each of $\pi\in\Pi;$ this is evident from the character formula for $\pi$
on $T(\mathbb{R})_{reg}.$

For consistency with the abelian case of the local Langlands correspondence,
\textit{i.e.} the case $G=T,$ we require that the renormalized packet $\Pi
_{D}$ consists of the discrete series representations attached to the
characters $(\Lambda_{(\omega\mu-\iota,\lambda)})_{D},$ $\omega\in\Omega.$
Recall from (4.2) of \cite{KS12}, if $\chi$ is a character on $T(\mathbb{R})$
then $\chi_{D}$ is defined to be $\chi^{-1}.$ In other words, we replace the
characters $\Lambda_{(\omega\mu-\iota,\lambda)}$ by $\Lambda_{(-\omega
\mu+\iota,-\lambda)}$ and the central character $\Lambda_{(\mu,\lambda)}$ by
$\Lambda_{(-\mu,-\lambda)}.$ Notice that this applies also to the parameters
that we have been ignoring, namely those that are essentially bounded but not bounded.

First we check that this prescription of $\Pi_{D}$ agrees with that in (4.4.)
of \cite{KS12}. Recall the automorphism of $^{L}G=G^{\vee}\rtimes
W_{\mathbb{R}}$ given by $^{L}\theta_{0}=\theta_{0}^{\vee}\times id$, where
$\theta_{0}^{\vee}$ (written in \cite{KS12} as $\theta_{0})$ denotes the
automorphism of $G^{\vee}$ that preserves $spl^{\vee}$ and acts on
$\mathcal{T}$ as $t\rightarrow\omega_{0}(t)^{-1}.$ Here $\omega_{0}$ is the
longest element of the Weyl group of $\mathcal{T}$. In particular, if $Z_{G}$
is anisotropic then $\theta_{0}^{\vee}$ is given by the action of the Galois
element $\sigma$ on $G^{\vee}$. According to \cite{KS12}, the packet $\Pi_{D}
$ attached to $\varphi$ is the classical packet for $^{L}\theta_{0}%
\circ\varphi.$ Since%
\begin{equation}
\theta_{0}^{\vee}(\mu)=-\omega_{0}\mu\text{, \ }\theta_{0}^{\vee}%
(\lambda)\equiv-\lambda\operatorname{mod}\mathcal{K},
\end{equation}
we see that $\Pi_{D}$ consists of the representations attached to
$\varphi(-\omega_{0}\mu,-\lambda)$ in the classical correspondence. The
character data for these coincide with that for the representations we have
prescribed above, and so we are done.

There is also another more convenient way to describe the parameter
$^{L}\theta_{0}\circ\varphi$. Namely we replace $spl^{\vee}$ by its opposite
$(spl^{\vee})^{opp}$, \textit{i.e.} we replace $\mathcal{B}$ by its opposite
relative to $\mathcal{T}$ and replace the root vector $X_{\alpha^{\vee}}$ by
the root vector $X_{-\alpha^{\vee}}$ that forms a simple triple with
$X_{\alpha^{\vee}}$ and the coroot $H_{\alpha^{\vee}}$ for $\alpha^{\vee}$.
Relative to this new splitting, $\varphi(-\mu,-\lambda)$ is the almost
canonical representative for $^{L}\theta_{0}\circ\varphi.$

\textit{Now fix a parameter} $\varphi$ and consider both the classical packet
$\Pi$ and the renormalized packet $\Pi_{D}\ $attached as we have described.
There are various ways to relate $\Pi_{D}$ directly to $\Pi;$ see also
\cite{AV12}, Section 4 of \cite{Ka13}. Since our representations are unitary
we will use complex conjugation. For each such $\pi$ we consider its complex
conjugate $\overline{\pi}$ on the conjugate Hilbert space; $\overline{\pi}$ is
the contragredient or dual of $\pi.$

Relative to given toral data, we have written a member $\pi$ of $\Pi$ as
$\pi(\omega),$ where $\omega$ represents a \textit{left} coset of
$\Omega_{\mathbb{R}}$ in $\Omega.$ By definition \cite{Sh10}, the character
formula for $\pi$ on $T(\mathbb{R})_{reg}$ involves only the characters
$\Lambda_{(\omega^{-1}\mu-\iota,\lambda)},$ for $\omega$ in this coset.
Relative to the same toral data and $(spl^{\vee})^{opp}$, we write a member
$\pi_{D}$ of $\Pi_{D}$ as $\pi_{D}(\omega)$ if its character formula on
$T(\mathbb{R})_{reg}$ involves only the characters $(\Lambda_{(\omega^{-1}%
\mu-\iota,\lambda)})_{D}.$

\begin{lemma}
(i) For each $\omega\in\Omega,$ if $\pi\in\Pi$ and $\pi=\pi(\omega)$ then
$\overline{\pi}\in\Pi_{D\text{ }}$ and $\overline{\pi}=\pi_{D}(\omega).$ (ii)
The map $\pi\rightarrow\pi_{D}=\overline{\pi}$ defines a bijection
$\Pi\rightarrow\Pi_{D}.$
\end{lemma}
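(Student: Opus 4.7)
The plan is to reduce part (i) to a direct comparison of Harish--Chandra character formulas on the maximal torus $T$ that is anisotropic modulo $Z_{G}$, exploiting the fact that complex conjugation of a unitary representation agrees with the contragredient, and hence corresponds to complex conjugation of the character distribution on $T(\mathbb{R})_{reg}$. First, I would write out the character formula for $\pi(\omega)\in\Pi$ on $T(\mathbb{R})_{reg}$: it is, up to sign, $\Delta^{-1}\sum_{\omega'}\epsilon(\omega')\Lambda_{(\omega'^{-1}\mu-\iota,\lambda)}$, with $\omega'$ running over the left $\Omega_{\mathbb{R}}$-coset of $\omega$ that labels $\pi$, and $\Delta$ the Weyl denominator.

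Next I would use boundedness of $\varphi$. The condition on $\mu+\overline{\sigma}\mu$ in Section 7.3 forces each character $\Lambda_{(\omega'^{-1}\mu-\iota,\lambda)}$ of $T(\mathbb{R})$ to be unitary, so complex conjugation sends it to its inverse, which by definition is $(\Lambda_{(\omega'^{-1}\mu-\iota,\lambda)})_{D}$. The factors $\epsilon(\omega')$ are real, and the Weyl denominator on a compact Cartan is a product of terms $e^{\alpha/2}-e^{-\alpha/2}$ that conjugate to themselves up to an overall sign independent of $\omega'$. Consequently, the character of $\overline{\pi(\omega)}$ on $T(\mathbb{R})_{reg}$ is, up to the same overall sign, $\Delta^{-1}\sum_{\omega'}\epsilon(\omega')(\Lambda_{(\omega'^{-1}\mu-\iota,\lambda)})_{D}$.

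Now I compare with the definition of $\pi_{D}(\omega)\in\Pi_{D}$. By the discussion preceding the lemma, the almost canonical representative of $^{L}\theta_{0}\circ\varphi$ relative to the opposite splitting $(spl^{\vee})^{opp}$ is $\varphi(-\mu,-\lambda)$, and the associated characters on $T(\mathbb{R})$, when transported with the same toral data, are exactly $(\Lambda_{(\omega'^{-1}\mu-\iota,\lambda)})_{D}$. Thus $\pi_{D}(\omega)$ has, by convention, a character formula on $T(\mathbb{R})_{reg}$ matching the one computed for $\overline{\pi(\omega)}$. Harish-Chandra's characterization of discrete series by their restrictions to $T(\mathbb{R})_{reg}$ then forces $\overline{\pi(\omega)}=\pi_{D}(\omega)$, which is (i). Part (ii) is immediate: $\pi\mapsto\overline{\pi}$ is an involution on unitary representations, (i) shows it carries $\Pi$ into $\Pi_{D}$, and the symmetric statement (obtained by replacing $\varphi$ by $^{L}\theta_{0}\circ\varphi$, which flips $\Pi$ and $\Pi_{D}$) provides the inverse.

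The main obstacle is purely bookkeeping: verifying that the role of the opposite splitting $(spl^{\vee})^{opp}$ in the definition of $\pi_{D}(\omega)$ exactly cancels the sign and indexing changes that would otherwise appear when passing from $\varphi(\mu,\lambda)$ to $\varphi(-\omega_{0}\mu,-\lambda)$, so that the \emph{same} $\omega$ labels corresponding members of $\Pi$ and $\Pi_{D}$. Once this is arranged, unitarity (hence boundedness of $\varphi$) and Harish-Chandra's rigidity theorems do the rest.
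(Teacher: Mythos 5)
Your proof is correct and takes essentially the same approach as the paper's: the paper gives a one-line argument citing Harish--Chandra's character formula for $\overline{\pi}$ on $T(\mathbb{R})_{reg}$ and his characterization of discrete series characters, and your proposal simply spells out the details of that comparison, including the unitarity of the $\Lambda$'s forced by boundedness, the sign from the Weyl denominator, and the indexing convention via $(spl^{\vee})^{opp}$ set up in the paragraph preceding the lemma. The bijectivity argument for (ii) matches as well.
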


\begin{proof}
From Harish-Chandra's character formula for $\overline{\pi}$ on $T(\mathbb{R}%
)_{reg}$ and his characterization of discrete series characters (see
\cite[Section 27]{HC75}), we see that $\overline{\pi}=\pi_{D}(\omega),$ and
the result follows.
\end{proof}

\begin{remark}
We could also argue Lemma 7.4.1 by applying Theorem 1.3 of \cite{AV12}.
\end{remark}

Finally, recall the stable characters $St$-$Trace$ $\pi,$ $St$-$Trace$
$\pi_{D}$ attached to $\Pi,\Pi_{D}$. Here each is simply the sum of the
characters of the members of the packet. Then the last lemma implies that%
\begin{equation}
St\text{-}Trace\text{ }\pi_{D}=\overline{St\text{-}Trace\text{ }\pi.}%
\end{equation}
Recall that for a distribution $\Theta$ and test function $f$ we have
$\overline{\Theta}(f)=\overline{\Theta(\overline{f})}$). Thus if
$St$-$Char(\pi,-),$ $St$-$Char(\pi_{D},-)$ denote the analytic functions on
the regular semisimple set of $G(\mathbb{R})$ that represent these two
distributions then
\begin{equation}
St\text{-}Char(\pi_{D},\delta)=\overline{St\text{-}Char(\pi,\delta)},
\end{equation}
for all regular semisimple $\delta\in G(\mathbb{R}).$

\subsection{Very regular related pairs and attached packets}

Consider a very regular related pair of bounded parameters $(\varphi
_{1},\varphi)$ as in (7.2). We first attach packets $\Pi_{1},\Pi$ in the
classical correspondence. For any $\pi_{1}\in\Pi_{1}$ and $\pi\in\Pi,$ we call
$(\pi_{1},\pi)$ a very regular related pair (of tempered irreducible
representations). Recall that the $G^{\vee}$-regularity of $\varphi_{1}$
ensures that if $\Pi_{1}$ consists of discrete series representations of
$H_{1}(\mathbb{R})$ then $\Pi$ consists of discrete series representations of
$G(\mathbb{R});$ see also \cite{Sh10}. More generally, we invoke parabolic
descent in standard endoscopy. Namely, to $(\varphi_{1},\varphi)$ we may
attach an endoscopic pair of Levi subgroups $M_{1},$ $M$ and discrete series
packets for both $M_{1}(\mathbb{R}),$ $M(\mathbb{R})\ $respectively. The
members of the corresponding packets for $H_{1}(\mathbb{R}),$ $G(\mathbb{R})$
are representations parabolically induced from representations in the packets
for $M_{1}(\mathbb{R}),$ $M(\mathbb{R})$ respectively.

Observe that Lemma 7.4.1 may be applied to all bounded parameters because
complex conjugation commutes in the obvious sense with unitary parabolic
induction in our setting (or again we could argue via \cite{AV12}). The
assertions following Lemma 7.4.1 on stable characters apply also.

Continuing with a very regular related pair of bounded parameters
$(\varphi_{1},\varphi),$ we also use the renormalized correspondence to attach
packets $\Pi_{1,D},\Pi_{D}$ to $(\varphi_{1},\varphi)$. Let $\pi_{1,D}\in
\Pi_{1,D}$ and $\pi_{D}\in\Pi_{D}.$ Then we call $(\pi_{1,D},\pi_{D})$ a very
regular related pair also. Thus if $(\pi_{1},\pi)$ is a very regular related
pair in the first (classical) sense then $(\pi_{1,D},\pi_{D})=(\overline
{\pi_{1}},\overline{\pi})$ is a very regular related pair in the second
(renormalized) sense, with the converse also true. We will use this throughout
Sections 8 and 9 without further comment.

\section{\textbf{Archimedean case: transfer theorems}}

\subsection{Spectral relative factors $\Delta^{\prime}$ and $\Delta_{D}$}

We work in the setting of very regular related pairs of bounded parameters; we
take two such pairs and consider the various attached packets. The relative
terms $\Delta_{I},\Delta_{II},\Delta_{III}$ were defined in \cite{Sh10} for
related pairs in the classical sense.

The term $\Delta_{I}$ is a sign given by Tate-Nakayama pairing. Thus we use
the same definition of $\Delta_{I}$ for the renormalized related pairs (see
Section 8 of \cite{Sh10}), so that
\begin{equation}
\Delta_{I}(\pi_{1,D},\pi_{D})=\Delta_{I}(\pi_{1},\pi)
\end{equation}
for any $(\pi_{1},\pi),(\pi_{1,D},\pi_{D})$ attached to given very regular
related pair of parameters.

In the case of $\Delta_{III}$, another Tate-Nakayama sign, we follow Section
10 of \cite{Sh10}. Then we conclude that%
\begin{equation}
\Delta_{III}(\pi_{1},\pi;\pi_{1}^{\dag},\pi^{\dag})=\Delta_{III}(\overline
{\pi_{1}},\overline{\pi};\overline{\pi_{1}^{\dag}},\overline{\pi^{\dag}})
\end{equation}
for all very regular related pairs $(\pi_{1},\pi),$ $(\pi_{1}^{\dag},\pi
^{\dag}).$

In Section 9 of \cite{Sh10}, $\Delta_{II}$ is defined as a quotient of
absolute terms that are fourth roots of unity. We now define the term
$\Delta_{II,D}$ in such a way that%
\begin{equation}
\Delta_{II,D}(\overline{\pi_{1}},\overline{\pi})=\Delta_{II}(\pi_{1},\pi
)^{-1}=\overline{\Delta_{II}(\pi_{1},\pi)},
\end{equation}
for all very regular related pairs $(\pi_{1},\pi).$ Namely we proceed as in
Section 9 of \cite{Sh10}, but use $-\mu_{1},-\iota_{1},-\lambda_{1}$ in place
of $\mu_{1},\iota_{1},\lambda_{1}$ in character formulas$.\ $We also replace
$\mu^{\ast},\lambda^{\ast}$ described there by $-\mu^{\ast},-\lambda^{\ast}$.
Notice that this is done also when we define explicitly the geometric
$\Delta_{III_{2},D}$ of (3.2.1). Then the pair denoted $\mu,\lambda$ in
\cite{Sh10}, namely $\mu=\mu_{1}+\mu^{\ast},\lambda=\lambda_{1}+\lambda^{\ast
},$ is replaced by $-\mu,-\lambda$. There $\mu$ is not necessarily dominant.
In the case that $\mu$ is not dominant there is an additional sign
contribution to $\Delta_{II}$; we use the same one in $\Delta_{II,D}.$ If
$\mu$ is dominant then we call $\varphi_{1}$ \textit{well-positioned} relative
to $\varphi$; the discussion of Section 7c of \cite{Sh10} shows that this is a
well-defined notion. Notice also that $\Delta_{II},\Delta_{II,D}$ depend only
on the pair of parameters and not on the choice among attached representations
since only the stable characters are used in the definition. Recall that the
stable character of $\Pi_{1,D}$, $\Pi_{D}$ respectively is the complex
conjugate of that of $\Pi_{1},\Pi$ respectively. So far, we have arranged to
replace everything in each character formula in Section 9 of \cite{Sh10} by
its complex conjugate except for the constant, a fourth root of unity, that
contributes to $\Delta_{II}(\pi_{1},\pi)$. We thus also replace $\Delta
_{II}(\pi_{1},\pi)$ by its complex conjugate, as desired. Recall that the
\textit{relative} version of $\Delta_{II}$ is a sign \cite{Sh10}.

As usual for the real case, we identify $\Delta^{\prime}$ with $\Delta
=\Delta_{I}\Delta_{II}\Delta_{III}$. Next, set
\begin{equation}
\Delta_{D}=\Delta_{I}\Delta_{II,D}\Delta_{III}.
\end{equation}
The following is now immediate.

\begin{lemma}
Suppose that $(\pi_{1},\pi)$ and $(\pi_{1}^{\dag},\pi^{\dag})$ are very
regular related pairs. Then we have an equality of signs:%
\begin{equation}
\Delta(\pi_{1},\pi;\pi_{1}^{\dag},\pi^{\dag})=\Delta_{D}(\overline{\pi_{1}%
},\overline{\pi};\overline{\pi_{1}^{\dag}},\overline{\pi^{\dag}}).
\end{equation}

\end{lemma}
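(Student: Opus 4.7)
The plan is to prove the equality of the two relative factors by decomposing each according to the factorizations
\[
\Delta = \Delta_{I}\,\Delta_{II}\,\Delta_{III}, \qquad \Delta_{D} = \Delta_{I}\,\Delta_{II,D}\,\Delta_{III},
\]
and then matching the three kinds of terms one at a time. Since only $\Delta_{III}$ is genuinely relative, the relative versions of $\Delta_{I}$ and $\Delta_{II}$ (resp. $\Delta_{II,D}$) arise as quotients of their absolute counterparts at $(\pi_{1},\pi)$ and $(\pi_{1}^{\dag},\pi^{\dag})$.

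First I would handle $\Delta_{I}$: the identity (8.1.1) gives, for each of the two pairs, $\Delta_{I}(\overline{\pi_{1}},\overline{\pi}) = \Delta_{I}(\pi_{1},\pi)$ and similarly at the daggered pair. Taking the quotient yields
\[
\Delta_{I}(\overline{\pi_{1}},\overline{\pi};\overline{\pi_{1}^{\dag}},\overline{\pi^{\dag}}) = \Delta_{I}(\pi_{1},\pi;\pi_{1}^{\dag},\pi^{\dag}).
\]
Next, for $\Delta_{III}$ I would simply invoke (8.1.2). For the remaining factor I apply (8.1.3) in both the numerator and denominator of the relative quotient:
\[
\Delta_{II,D}(\overline{\pi_{1}},\overline{\pi};\overline{\pi_{1}^{\dag}},\overline{\pi^{\dag}}) = \frac{\Delta_{II}(\pi_{1},\pi)^{-1}}{\Delta_{II}(\pi_{1}^{\dag},\pi^{\dag})^{-1}} = \Delta_{II}(\pi_{1},\pi;\pi_{1}^{\dag},\pi^{\dag})^{-1}.
\]

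The main subtlety — really the only thing to watch — is the sign in this last step. The relative factor $\Delta_{II}(\pi_{1},\pi;\pi_{1}^{\dag},\pi^{\dag})$ is recalled at the end of (8.1) to be a sign (even though the absolute $\Delta_{II}$ terms are only fourth roots of unity), so its inverse coincides with itself. Multiplying the three matched equalities then gives $\Delta_{D}(\overline{\pi_{1}},\overline{\pi};\overline{\pi_{1}^{\dag}},\overline{\pi^{\dag}}) = \Delta(\pi_{1},\pi;\pi_{1}^{\dag},\pi^{\dag})$, as desired. Because all of the work has been done in (8.1.1)–(8.1.3) together with the normalization conventions set up just above, this really is just a bookkeeping combination rather than a substantive argument.
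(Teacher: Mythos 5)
Your proof is correct and is exactly the bookkeeping the paper intends: it declares the lemma ``now immediate'' after establishing (8.1.1)--(8.1.3), and you have filled in the short quotient computation, correctly isolating the one point that matters --- that the relative $\Delta_{II}$ is a sign so its inverse drops out --- which is precisely the paper's remark that ``(8.1.5) takes the form given because all the critical contributions are signs.''
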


We emphasize that (8.1.5) takes the form given because all the critical
contributions are signs.

\subsection{Spectral absolute $\Delta^{\prime}$ and $\Delta_{D}$}

We define absolute spectral factors $\Delta^{\prime}(\pi_{1},\pi)$,
$\Delta_{D}(\pi_{1,D},\pi_{D})$ following the same method as for the geometric
factors $\Delta^{\prime}(\gamma_{1},\delta)$, $\Delta_{D}(\gamma_{1},\delta)$.
In particular, if the very regular pair $(\varphi_{1},\varphi)$ is not related
then $\Delta^{\prime}$ and $\Delta_{D}$ vanish on the attached packets; more
precisely,
\begin{equation}
\Delta^{\prime}(\pi_{1},\pi)=\Delta_{D}(\pi_{1,D},\pi_{D})=0
\end{equation}
for all $\pi_{1}\in\Pi_{1},$ $\pi\in\Pi,$ $\pi_{1,D}\in\Pi_{1,D}$ and $\pi
_{D}\in\Pi_{D}$.

Now consider just the factors $\Delta^{\prime}$ for the classical local
Langlands correspondence. It is clear that for a given choice of geometric
factor $\Delta^{\prime}$ there is at most one choice of spectral factor
$\Delta^{\prime}$ for which the dual spectral transfer statement (7.1.1) is
true, and conversely the choice of spectral factor fixes the choice of
geometric factor. To describe which normalizations are compatible in this
sense we introduce yet another canonical factor, the compatibility factor
$\Delta^{\prime}(\pi_{1},\pi;\gamma_{1},\delta)$ (see Section 12 of
\cite{Sh10}). Here we again identify $\Delta^{\prime}$ with $\Delta=\Delta
_{I}\Delta_{II}\Delta_{III},$ and observe that only the term $\Delta_{III}%
(\pi_{1},\pi;\gamma_{1},\delta)$ is genuinely relative. Then we call
$\Delta^{\prime}(\gamma_{1},\delta)$ and $\Delta^{\prime}(\pi_{1},\pi)$
compatible if and only if
\begin{equation}
\Delta^{\prime}(\pi_{1},\pi)=\Delta^{\prime}(\pi_{1},\pi;\gamma_{1}%
,\delta)\text{ }\Delta^{\prime}(\gamma_{1},\delta)
\end{equation}
for some choice of very regular related pairs $(\pi_{1},\pi),(\gamma
_{1},\delta).$ Then it is true for all such pairs by the transitivity
properties (ii) and (iii) of Lemma 12.1 in \cite{Sh10}.

To consider compatibility of the factors $\Delta_{D}(\gamma_{1},\delta)$ and
$\Delta_{D}(\pi_{1,D},\pi_{D})$ we introduce $\Delta_{D}(\pi_{1,D},\pi
_{D};\gamma_{1},\delta),$ using once again the paradigm $\Delta_{D}$
$=\Delta_{I}\Delta_{II,D}\Delta_{III},$ and then we proceed in the same way as
for $\Delta^{\prime}.$ Following the arguments for Corollary 5.0.3 and Lemma
8.1.1 we observe that%
\begin{equation}
\Delta_{D}(\overline{\pi_{1}},\overline{\pi};\gamma_{1},\delta)=\overline
{\Delta^{\prime}(\pi_{1},\pi;\gamma_{1},\delta)},
\end{equation}
for all very regular related pairs $(\pi_{1},\pi),(\gamma_{1},\delta).$

\begin{lemma}
Let $\Delta^{\prime}(\gamma_{1},\delta)$, $\Delta_{D}(\gamma_{1},\delta)$ be
absolute transfer factors and let $c\in\mathbb{C}^{\times}$ be such that
\begin{equation}
\Delta_{D}(\gamma_{1},\delta)=c\text{ }\overline{\Delta^{\prime}(\gamma
_{1},\delta)}%
\end{equation}
for all very regular pairs $(\gamma_{1},\delta).$ Suppose that $\Delta
^{\prime}(\pi_{1},\pi)$ is compatible with $\Delta^{\prime}(\gamma_{1}%
,\delta)$ and that $\Delta_{D}(\pi_{1,D},\pi_{D})$ is compatible with
$\Delta_{D}(\gamma_{1},\delta).$ Then%
\begin{equation}
\Delta_{D}(\overline{\pi_{1}},\overline{\pi})=c\overline{\Delta^{\prime}%
(\pi_{1},\pi)}%
\end{equation}
for all very regular pairs $(\pi_{1},\pi).$ Conversely, (8.2.4) and (8.2.5)
imply that either both or neither of $\Delta^{\prime},\Delta_{D}$ have
geometric-spectral compatibility.
\end{lemma}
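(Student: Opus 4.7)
The plan is to reduce the first assertion to (8.2.3), which already packages all of the content about how individual renormalized terms relate to their classical counterparts via complex conjugation. Specifically, pick any very regular related pair $(\gamma_{1},\delta)$; by the transitivity properties cited after (8.2.2) the validity of compatibility is independent of which pair we choose. Combining hypothesis on $\Delta^{\prime}$ with the compatibility factor we have
\begin{equation*}
\Delta^{\prime}(\pi_{1},\pi)=\Delta^{\prime}(\pi_{1},\pi;\gamma_{1},\delta)\,\Delta^{\prime}(\gamma_{1},\delta),
\end{equation*}
and similarly for $\Delta_{D}$,
\begin{equation*}
\Delta_{D}(\overline{\pi_{1}},\overline{\pi})=\Delta_{D}(\overline{\pi_{1}},\overline{\pi};\gamma_{1},\delta)\,\Delta_{D}(\gamma_{1},\delta).
\end{equation*}

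First I would substitute (8.2.4) to rewrite $\Delta_{D}(\gamma_{1},\delta)$ as $c\,\overline{\Delta^{\prime}(\gamma_{1},\delta)}$ and then invoke (8.2.3) to rewrite $\Delta_{D}(\overline{\pi_{1}},\overline{\pi};\gamma_{1},\delta)$ as $\overline{\Delta^{\prime}(\pi_{1},\pi;\gamma_{1},\delta)}$. Multiplying the two conjugated factors together and pulling the bar outside produces
\begin{equation*}
\Delta_{D}(\overline{\pi_{1}},\overline{\pi})=c\,\overline{\Delta^{\prime}(\pi_{1},\pi;\gamma_{1},\delta)\,\Delta^{\prime}(\gamma_{1},\delta)}=c\,\overline{\Delta^{\prime}(\pi_{1},\pi)},
\end{equation*}
which is (8.2.5).

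For the converse, assume (8.2.4) and (8.2.5). If $\Delta^{\prime}(\pi_{1},\pi)$ is compatible with $\Delta^{\prime}(\gamma_{1},\delta)$, I would take the complex conjugate of the compatibility identity, multiply through by $c$, and then use (8.2.4) on the geometric factor, (8.2.5) on the spectral factor, and (8.2.3) on the compatibility factor to obtain the $\Delta_{D}$-version of the compatibility identity. The reverse direction (from $\Delta_{D}$-compatibility to $\Delta^{\prime}$-compatibility) is the same calculation run backwards, using that $c\in\mathbb{C}^{\times}$.

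The only nonroutine ingredient here is (8.2.3), but that was already established by mimicking Corollary 5.0.3 and Lemma 8.1.1 on the three pieces $\Delta_{I},\Delta_{II,D},\Delta_{III}$ of the spectral-geometric compatibility factor. Once (8.2.3) is in hand, the present lemma reduces to a two-line manipulation of the factorization of absolute factors into a compatibility factor times a geometric factor, so there is no substantive obstacle beyond bookkeeping.
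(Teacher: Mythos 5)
Your proof is correct and follows exactly the same route as the paper, which simply states "This is a consequence of (8.2.3)"; you have merely unpacked that one-line citation into the explicit manipulation of the factorization $\Delta = \Delta(\pi_1,\pi;\gamma_1,\delta)\,\Delta(\gamma_1,\delta)$ together with (8.2.3) and (8.2.4), and noted that the converse is the same computation read in reverse.
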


Recall that Corollary 6.1.2 proves the existence of $c$ for given factors
$\Delta^{\prime}(\gamma_{1},\delta)$, $\Delta_{D}(\gamma_{1},\delta).$

\begin{proof}
This is a consequence of (8.2.3).
\end{proof}

\subsection{Quasi-split case: spectral $\Delta_{0},\Delta_{0,D}$ and
$\Delta_{\lambda},\Delta_{\lambda,D}$}

If $G$ is quasi-split over $\mathbb{R}$ we may as well take $G=G^{\ast}$ and
the inner twist $\psi$ to be the identity automorphism of $G.$ We assemble
spectral factors $\Delta_{0},\Delta_{0,D}$ following the paradigm for the
geometric case in (6.2). These spectral factors are compatible with the
corresponding geometric ones; see Lemma 12.3 in \cite{Sh10} for $\Delta_{0}$
and argue similarly for $\Delta_{0,D}$ or use (8.2.3) above. We do the same
for the Whittaker factors $\Delta_{\lambda},\Delta_{\lambda,D}.$

\begin{lemma}
For all very regular pairs $(\pi_{1},\pi)$ we have%
\begin{equation}
\Delta_{\lambda,D}(\overline{\pi_{1}},\overline{\pi})=c\text{ }\overline
{\Delta_{\lambda}(\pi_{1},\pi)},
\end{equation}
where $c=(det(V))(-1)$ as in Lemma 6.3.1, and
\begin{equation}
\Delta_{\overline{\lambda},D}(\overline{\pi_{1}},\overline{\pi})=\overline
{\Delta_{\lambda}(\pi_{1},\pi)}.
\end{equation}

\begin{proof}
This follows from Lemmas 6.3.1 and 6.3.2, geometric-spectral compatibility for
each of the factors $\Delta_{\lambda},\Delta_{\lambda,D}$ and (8.2.3).
\end{proof}
\end{lemma}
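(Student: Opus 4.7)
The plan is to reduce both identities to their geometric counterparts (Lemmas 6.3.1 and 6.3.2) by routing through the spectral-geometric compatibility factors of Section 8.2. First, I would fix an auxiliary very regular related geometric pair $(\gamma_{1},\delta)$ with parameter data matching that of $(\pi_{1},\pi)$. Invoking geometric-spectral compatibility for the Whittaker-normalized factors (inherited from Lemma 12.3 of \cite{Sh10} for $\Delta_{\lambda}$, and set up for $\Delta_{\lambda,D}$ at the beginning of Section 8.3), each absolute spectral factor can then be written as the product of its relative compatibility factor with the corresponding absolute geometric factor evaluated at $(\gamma_{1},\delta)$.

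Second, I would apply (8.2.3) to identify the compatibility factor for $\Delta_{\lambda,D}$ at $(\overline{\pi_{1}},\overline{\pi})$ with the complex conjugate of the compatibility factor for $\Delta_{\lambda}$ at $(\pi_{1},\pi)$. The only thing needed to transplant (8.2.3) to the Whittaker setting is the observation that $\varepsilon_{L}(V,\psi)$ is an absolute constant and so cancels inside relative factors; this is immediate from the definitions in Section 6.3. Combining this with Lemma 6.3.1, which supplies the constant $c=(\det(V))(-1)$ linking $\Delta_{\lambda,D}(\gamma_{1},\delta)$ and $\overline{\Delta_{\lambda}(\gamma_{1},\delta)}$, the first identity (8.3.1) falls out by direct multiplication. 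For the second identity (8.3.2), I would rerun the argument with Lemma 6.3.2 in place of Lemma 6.3.1; the replacement $\lambda\mapsto\overline{\lambda}$ absorbs the factor $c$ on the geometric side.

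The place where care is needed is confirming that (8.2.3) transfers legitimately to the Whittaker-normalized spectral factors, which as noted amounts to the epsilon-factor cancellation in relative factors. Beyond that, the argument is a chain of substitutions, and the result is independent of the choice of auxiliary pair $(\gamma_{1},\delta)$ by the transitivity properties of the compatibility factor (see (ii) and (iii) of Lemma 12.1 in \cite{Sh10}).
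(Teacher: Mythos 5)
Your argument is correct and is exactly the chain the paper has in mind: write each Whittaker-normalized spectral factor as (compatibility factor) $\times$ (geometric factor at an auxiliary related pair $(\gamma_1,\delta)$), conjugate via (8.2.3), and feed in Lemma 6.3.1 (resp.\ 6.3.2) on the geometric side. One small remark: your worry about ``transplanting (8.2.3) to the Whittaker setting'' is moot, since the compatibility factor $\Delta^{\prime}(\pi_1,\pi;\gamma_1,\delta)$ is defined canonically from $\Delta_I\Delta_{II}\Delta_{III}$ and does not depend on the choice of absolute normalization (Whittaker or otherwise) at all---though your $\varepsilon_L(V,\psi)$-cancellation observation is a valid alternative way to see this.
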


See (8.4.3) below to extend this to all bounded related pairs.

\subsection{Consequences for spectral transfer}

Suppose we have compatible geometric and spectral factors $\Delta^{\prime}$
and that the test functions $f\in C_{c}^{\infty}(G(\mathbb{R}))$ and $f_{1}\in
C_{c}^{\infty}(H_{1}(\mathbb{R}),\lambda_{H_{1}})$ have $\Delta^{\prime}%
$-matching orbital integrals as in (6.4.1). Here we may replace $C_{c}%
^{\infty}(-)$ by the corresponding Harish-Chandra-Schwartz space
$\mathcal{C}(-)$ if we wish. Then $f$ and $f_{1}$ also have $\Delta^{\prime}
$-matching traces on the very regular pairs of irreducible tempered
representations $(\pi_{1},\pi)$ as in (7.1.1). A proof of this, which comes
from \cite{Sh82} and \cite{LS90}, is reviewed in detail in \cite{Sh10}.

Suppose instead we work with compatible geometric and spectral factors
$\Delta_{D}.$ Returning to (6.4), suppose as above that $f$ is a test function
on $G(\mathbb{R}).$ Use geometric $\Delta^{\prime}$-transfer for $\overline
{f}$ to define $f_{1,D}$ as in (6.4.2) such that $f$ and $f_{1,D}$ have
$\Delta_{D}$-matching orbital integrals. We claim that $f$ and $f_{1,D} $ also
have $\Delta_{D}$-matching traces on the very regular pairs of irreducible
tempered representations $(\pi_{1,D},\pi)$ as in (7.1.2). For this, we will
change notation on the right side of (7.1.2), writing $\pi_{D} $ in place of
$\pi$ and then setting $\pi=\overline{\pi_{D}},$ $\pi_{1}=\overline{\pi_{1,D}%
}.$ We observe that%
\begin{equation}
\Delta_{D}(\pi_{1,D},\pi_{D})\text{ }Trace\text{ }\pi_{D}(f)=c\text{
}\overline{\Delta(\pi_{1},\pi)\text{ }Trace\text{ }\pi(\overline{f})}%
\end{equation}
and
\begin{equation}
St\text{-}Trace\text{ }\pi_{1,D}(f_{1,D})=c\text{ }\overline{St\text{-}%
Trace\text{ }\pi_{1}((\overline{f})_{1})}.
\end{equation}
Thus the claim follows. Conversely if we start with $\Delta_{D}$-transfer then
we may deduce $\Delta^{\prime}$-transfer.

We now drop the \textit{very regular} assumption on the spectral side. The
transfer proved on the geometric side provides a correspondence on test
functions that is full in the sense that it is sufficient to establish the
dual transfer to a tempered invariant eigendistribution on $G(\mathbb{R})$ of
\textit{any} stable tempered trace $St$-$Trace$ $\pi_{1,D}$ on $H_{1}%
(\mathbb{R})$ for which $Z_{1}(\mathbb{R})$ acts correctly. It remains then to
define the relevant spectral transfer factors and identify the distribution as
the right side of (7.1.2). We could proceed directly, copying the approach for
$\Delta^{\prime}$-transfer described in \cite{Sh10}. That involves long and
delicate arguments with limits of discrete series representations, although
the adjustments needed are minor. Alternatively, we may invoke the results for
$\Delta^{\prime}$-transfer and use the formula (8.2.5) to extend the
definition of $\Delta_{D}(\pi_{1,D},\pi_{D})$ by
\begin{equation}
\Delta_{D}(\pi_{1,D},\pi_{D})=c\text{ }\overline{\Delta^{\prime}(\overline
{\pi_{1,D}},\overline{\pi_{D}})},
\end{equation}
where the constant $c$ is given by (8.2.4). Notice that Lemma 8.3.1 remains
true. Now we deduce (7.1.2) from (7.1.1) as in the very regular case. In conclusion:

\begin{lemma}
Suppose test functions $f$ on $G(\mathbb{R})$ and $f_{1,D}$ on $H_{1}%
(\mathbb{R})$ have $\Delta_{D}$-matching orbital integrals. Then
\begin{equation}
St\text{-}Trace\text{ }\pi_{1,D}(f_{1,D})=\sum_{\pi_{D}}\text{ }\Delta_{D}%
(\pi_{1,D},\pi_{D})\text{ }Trace\text{ }\pi(f),
\end{equation}
for each tempered irreducible admissible representation $\pi_{1,D}$ of
$H_{1}(\mathbb{R})$ such that the central subgroup $Z_{1}(\mathbb{R})$ acts by
the character $\lambda_{H_{1},D},$ where the summation is over tempered
irreducible admissible representations $\pi_{D}$ of $G(\mathbb{R}).$
\end{lemma}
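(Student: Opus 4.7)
The plan is to bootstrap from the classical $\Delta^{\prime}$-transfer theorem of \cite{Sh10}, which is already known to hold for all tempered irreducible $\pi_{1}$ (not just for very regular parameters), and then transport the identity across complex conjugation using the constant $c$ from Corollary~6.1.2.

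First I would set up the geometric bridge: given $f \in C_{c}^{\infty}(G(\mathbb{R}))$ and $f_{1,D}$ with $\Delta_{D}$-matching orbital integrals, I use the discussion in (6.4) to produce a test function $(\overline{f})_{1} \in C_{c}^{\infty}(H_{1}(\mathbb{R}),\lambda_{H_{1}})$ which is $\Delta^{\prime}$-matched to $\overline{f}$, and then observe that $f_{1,D}$ and $c\,\overline{(\overline{f})_{1}}$ differ by at most the inherent ambiguity in the transfer (both satisfy the same $\Delta_{D}$-matching identity with $f$). By the $\Delta^{\prime}$-transfer theorem for tempered representations, for every tempered irreducible $\pi_{1}$ of $H_{1}(\mathbb{R})$ on which $Z_{1}(\mathbb{R})$ acts by $\lambda_{H_{1}}$ we have
\begin{equation*}
St\text{-}Trace\ \pi_{1}((\overline{f})_{1}) = \sum_{\pi}\ \Delta^{\prime}(\pi_{1},\pi)\ Trace\ \pi(\overline{f}).
\end{equation*}

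Next I would conjugate this identity and multiply by $c$. On the left, using $\overline{St\text{-}Trace\ \pi_{1}(g)} = St\text{-}Trace\ \overline{\pi_{1}}(\overline{g})$ for any test function $g$ (a consequence of $\overline{\Theta}(g) = \overline{\Theta(\overline{g})}$ applied to the stable character distribution, combined with Lemma~7.4.1 and its parabolic-induction extension in (7.5)), the left side becomes $St\text{-}Trace\ \pi_{1,D}(f_{1,D})$ after setting $\pi_{1,D} = \overline{\pi_{1}}$ and recalling $f_{1,D} = c\,\overline{(\overline{f})_{1}}$. On the right, each summand becomes
\begin{equation*}
c\,\overline{\Delta^{\prime}(\pi_{1},\pi)}\ \overline{Trace\ \pi(\overline{f})} = \Delta_{D}(\overline{\pi_{1}},\overline{\pi})\ Trace\ \overline{\pi}(f),
\end{equation*}
where the factor identity is precisely the extended definition (8.4.5), and the trace identity is standard. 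Reindexing the sum by $\pi_{D} = \overline{\pi}$ gives the desired formula.

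The only delicate point is to verify that the extended definition (8.4.5) of $\Delta_{D}(\pi_{1,D},\pi_{D})$ for general bounded (not merely very regular) parameters is consistent with the geometric-spectral compatibility invoked in Lemma~8.3.1 and Lemma~8.2.1. This is the main obstacle, but it is addressed by the author's remark that (8.2.5) still holds and Lemma~8.3.1 remains true in this extension; so consistency is built into the definition. Once this is granted the whole computation is a single application of complex conjugation to the $\Delta^{\prime}$-identity. The argument is entirely parallel to the very regular case handled in (8.4.1)--(8.4.2), with the extension (8.4.5) playing the role that Lemma~8.1.1 played there.
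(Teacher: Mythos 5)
Your proof proposal follows essentially the same route as the paper: invoke the full tempered $\Delta^{\prime}$-transfer of \cite{Sh10}, use the geometric bridge of (6.4) to relate $f_{1,D}$ to $c\,\overline{(\overline{f})_{1}}$, define $\Delta_{D}(\pi_{1,D},\pi_{D})$ for general bounded related pairs by (8.4.3) (which you cite as (8.4.5), a minor numbering slip), and then conjugate and reindex exactly as in the very regular case (8.4.1)--(8.4.2). The "delicate point" you flag is precisely the one the paper handles by noting that Lemma 8.3.1 remains true under the extended definition; your reasoning and the paper's coincide.
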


\subsection{Extended groups and extended packets}

Turning to structure on stable classes and packets, we recall that if the
simply-connected covering $G_{sc}$ of the derived group of $G$ has nontrivial
first Galois cohomology set $H^{1}(\Gamma,G_{sc})$ then stable conjugacy
classes of regular elliptic elements have insufficient conjugacy classes, and
discrete series packets have insufficient members, for an exact duality with
an evident candidate. In the notation of (7.3), this candidate is the group of
$\overline{\sigma}$-invariants in the quotient of $\mathcal{T}$ by the center
of $G^{\vee},$ and the evidence is Tate-Nakayama duality.

As Vogan first pointed out, such problems may be resolved by using several
inner forms simultaneously in place of the single group $G$. We follow
Kottwitz's formulation of an extended group, also called a $K$-group
\cite{Ar99}. See Section 4 of \cite{Sh08} for a review with examples; unitary
groups are discussed at length in the reference [S7] of that paper, available
at the author's website. This solves our problem minimally and is useful in
stabilization of the Arthur-Selberg trace formula \cite{Ar06, Ar13}.

Both geometric and spectral factors extend naturally to this setting. See
Section 5 of \cite{Sh08} for a discussion of $\Delta=\Delta^{\prime}$. The
discussion for $\Delta_{D}$ parallels that for $\Delta^{\prime}$ in the same
manner as for a single group. We will assume from now on, \textit{without
changing notation}, that we are in the extended group setting.

\section{\textbf{Archimedean case: applications}}

\subsection{Structure on tempered packets}

Here we recall that our factors $\Delta=\Delta^{\prime}$ give structure on
tempered packets in the form proposed by Arthur in \cite{Ar06}, remark that
there is an analog for $\Delta_{D},$ and comment on the relation between the two.

Before starting, we should note that for extended groups without a quasi-split
component, recent work of Kaletha \cite{Ka13a} allows us to replace
$\mathcal{S}_{\varphi}^{sc}$ and $\zeta$ with better behaved objects, but only
after modifying the notion of extended group. Our present elementary approach
will be sufficient for the applications in \cite{Sh} and will simplify some
already delicate computations.

Let $\boldsymbol{\varphi}$ be a bounded Langlands parameter for the extended
group $G.$ As usual, $\varphi$ will denote a representative for
$\boldsymbol{\varphi}\mathbf{.}$ We consider the group $S_{\varphi}^{ad}$
consisting of the images in $G_{ad}^{\vee}=(G^{\vee})_{ad}$ of those elements
in $G^{\vee}$ which fix the image of $\varphi$ under the conjugation action.
Write $\mathcal{S}_{\varphi}$ for the component group of $S_{\varphi}^{ad}.$
Then $\mathcal{S}_{\varphi}$ is a finite abelian group, in fact a sum of
groups of order two.

A change to representative $\varphi^{\prime}$ determines a unique isomorphism
$\mathcal{S}_{\varphi}\rightarrow\mathcal{S}_{\varphi^{\prime}}.$ In the
quasi-split case the group $\mathcal{S}_{\varphi}$ is sufficient for our
purposes, but in general we introduce also $S_{\varphi}^{sc},$ the inverse
image of $S_{\varphi}^{ad}$ in $G_{sc}^{\vee}=(G^{\vee})_{sc}.$ Then
$\mathcal{S}_{\varphi}^{sc}$ will be the component group of $S_{\varphi}%
^{sc}.$ In Section 7 of \cite{Sh08} we attach to $s\in\mathcal{S}_{\varphi
}^{sc}$ an endoscopic group and parameter $\varphi^{(s)}$ for that group such
that $(\varphi^{(s)},\varphi)$ is a related pair. Moreover, $\varphi^{(s)}$ is
well-positioned relative to $\varphi$ (see (8.1) above). Let $\Pi^{(s)},\Pi$
denote the corresponding classical packets. Let $\pi^{(s)}\in\Pi^{(s)}$; the
choice does not matter. We also pick a basepoint $\pi^{base}$ for $\Pi$ and
character $\zeta$ on $\mathcal{S}_{\varphi}^{sc}$ as specified below. Then
\begin{equation}
\left\langle s,\pi^{base}\right\rangle =\zeta(s)
\end{equation}
and
\begin{equation}
\left\langle s,\pi\right\rangle \diagup\left\langle s,\pi^{\dag}\right\rangle
=\Delta(\pi^{(s)},\pi;\pi^{(s)},\pi^{\dag}),
\end{equation}
for all $s\in\mathcal{S}_{\varphi}^{sc}$ and $\pi,\pi^{\dag}\in\Pi,$ together
define a pairing
\begin{equation}
\left\langle -,-\right\rangle :\mathcal{S}_{\varphi}^{sc}\times\Pi
\rightarrow\mathbb{C}^{\times}%
\end{equation}
that identifies $\Pi$ as the set of characters $\zeta\otimes\chi,$ where
$\chi$ is a character on $\mathcal{S}_{\varphi}^{sc}$ trivial on the kernel of
$\mathcal{S}_{\varphi}^{sc}\rightarrow\mathcal{S}_{\varphi}.$ For all this, as
well as the requirements on $\zeta,$ see Sections 7 and 11 of \cite{Sh08}. The
main result there is Theorem 7.5 and its reinterpretation in Corollary 11.1.

Thus $\Pi$ is identified as the dual of $\mathcal{S}_{\varphi}$ only up to a
twist. If we turn now to $\Delta_{D}$ and the renormalized packets $\Pi
_{D}^{(s)},\Pi_{D}$ attached to the related pair $(\varphi^{(s)},\varphi), $
we may take $\pi_{D}^{base}=\overline{\pi^{base}}$ and reuse $\zeta$ if we
wish because the character $\zeta_{G}$ of Corollary 11.1 of \cite{Sh08} has
order one or two and so coincides with its renormalized version. Then, by
Lemma 8.1.1 above, we get the pairing
\begin{equation}
\left\langle -,-\right\rangle _{D}:\mathcal{S}_{\varphi}^{sc}\times\Pi
_{D}\rightarrow\mathbb{C}^{\times}%
\end{equation}
given by
\begin{equation}
\left\langle s,\pi_{D}\right\rangle _{D}=\left\langle s,\pi\right\rangle ,
\end{equation}
for all $s\in\mathcal{S}_{\varphi}^{sc}$ and $\pi_{D}\in\Pi_{D}$, where
$\pi=\overline{\pi_{D}}.$ We could also have used $\overline{\zeta}$ or a
different basepoint, etc.. In short, Lemma 8.1.1 allows us to apply the
results in Sections 7 and 11 of \cite{Sh08} to renormalized packets.

In the quasi-split case, \textit{i.e.} the case that the extended group has a
quasi-split component which is then unique \cite{Sh08}, we may return to
$\mathcal{S}_{\varphi}$ as described separately in the next subsection.

\subsection{Whittaker normalizations and strong base-point property}

We first observe the strong base-point property for the renormalized Whittaker
factors $\Delta_{\lambda,D}.$

\begin{lemma}
Let $\varphi$ be a bounded Langlands parameter for the extended group $G$ of
quasi-split type and denote by $\Pi_{D}$ the renormalized extended packet
attached to $\varphi.$ Let $\lambda$ be a set of Whittaker data for $G\ $and
suppose $\pi_{D}$ is the unique member of $\Pi_{D}$ that is $\lambda$-generic.
Then%
\begin{equation}
\Delta_{\lambda,D}(\pi_{D}^{(s)},\pi_{D})=1
\end{equation}
for all $s\in\mathcal{S}_{\varphi}.$
\end{lemma}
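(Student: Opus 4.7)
The plan is to deduce the renormalized strong base-point property from the classical one by passing through complex conjugation, exactly in parallel with how Lemma 8.1.1 related the two versions of the pairing for dual transfer. The classical strong base-point property, which I take as established (it is the $\Delta$-analog proved in the references and restated in terms of $\Delta^{\prime} = \Delta$), asserts that for any choice of Whittaker data $\lambda'$ the unique $\lambda'$-generic member $\pi'$ of the classical packet $\Pi$ satisfies $\Delta_{\lambda'}(\pi^{(s)}, \pi') = 1$ for all $s\in \mathcal{S}_\varphi$. The whole point is that we are free to apply this to $\overline{\lambda}$ in place of $\lambda$.

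The first step is to identify which member of $\Pi_D$ is the $\lambda$-generic one. Complex conjugation of representations exchanges Whittaker models for $\psi$ with Whittaker models for $\psi_{-1}$, hence swaps $\lambda$-genericity with $\overline{\lambda}$-genericity. By Lemma 7.4.1 (extended to tempered packets via parabolic induction as in (7.5)), the bijection $\pi \mapsto \overline{\pi} = \pi_D$ identifies $\Pi$ with $\Pi_D$, so the $\lambda$-generic member of $\Pi_D$ is $\pi_D = \overline{\pi^{\sharp}}$, where $\pi^{\sharp}\in \Pi$ is the $\overline{\lambda}$-generic member of the classical packet. The same identification applies on each endoscopic group: the endoscopic packet $\Pi_D^{(s)}$ consists of the complex conjugates of members of $\Pi^{(s)}$, and the Whittaker datum on $H_1$ induced from $\lambda$ is handled symmetrically.

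The second step is the computation itself. Applying the spectral version of Lemma 8.3.1 (equation (8.3.2)) with $\lambda$ replaced by $\overline{\lambda}$ (and using that $\overline{\overline{\lambda}} = \lambda$) gives
\[
\Delta_{\lambda,D}(\overline{\pi_1}, \overline{\pi}) = \overline{\Delta_{\overline{\lambda}}(\pi_1, \pi)}
\]
for every very regular related pair $(\pi_1, \pi)$. Specializing to $\pi_1 = (\pi^{(s)})^{\sharp}$ and $\pi = \pi^{\sharp}$, and invoking the classical strong base-point property for the Whittaker datum $\overline{\lambda}$, we obtain
\[
\Delta_{\lambda,D}(\pi_D^{(s)}, \pi_D) = \overline{\Delta_{\overline{\lambda}}((\pi^{(s)})^{\sharp}, \pi^{\sharp})} = \overline{1} = 1,
\]
which is exactly (9.2.1).

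The main obstacle is the bookkeeping around genericity and Whittaker data: one has to check carefully that the $\lambda$-generic element of $\Pi_D$ really is the complex conjugate of the $\overline{\lambda}$-generic element of $\Pi$, and that the same identification holds after transporting Whittaker data to the endoscopic group $H_1$, so that both entries of $\Delta_{\lambda,D}$ are in the image of complex conjugation from the classical side. Everything else is a direct consequence of the spectral complex-conjugation identity (the analog of Lemma 6.3.2) combined with the classical base-point property applied to the dual Whittaker datum.
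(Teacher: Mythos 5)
Your proof is correct and follows the same route as the paper: invoke the classical strong base-point property (Theorem 11.5 of [Sh08]) for $\Delta_{\lambda}$ and then pass to $\Delta_{\lambda,D}$ via the conjugation identity (8.3.2). The paper compresses this to "Then (8.3.2) finishes the proof," and your write-up supplies the bookkeeping it leaves implicit, namely that complex conjugation swaps $\lambda$- and $\overline{\lambda}$-genericity, so the classical base-point property must be applied with the dual Whittaker datum $\overline{\lambda}$.
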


\begin{proof}
The parallel statement for the factors $\Delta_{\lambda}^{\prime}%
=\Delta_{\lambda}$ and the classical extended packet $\Pi$ attached to
$\varphi$ is proved as Theorem 11.5 in \cite{Sh08}. Then (8.3.2) finishes the proof.
\end{proof}

Continuing with $\Pi,$ the classical extended packet attached to $\varphi,$ we
recall from Lemma 11.4 and Corollary 14.1 of \cite{Sh10} that
\begin{equation}
\Delta_{\lambda}(\pi^{(s)},\pi)=\pm1,
\end{equation}
for all $s\in\mathcal{S}_{\varphi},\pi\in\Pi.$ Then, by (8.3.2) again,%
\begin{equation}
\Delta_{\lambda,D}(\pi_{D}^{(s)},\pi_{D})=\pm1,
\end{equation}
for all $s\in\mathcal{S}_{\varphi},\pi_{D}\in\Pi_{D}.$

We have further from Theorem 7.5 and Corollary 11.1 of \cite{Sh08} that the
pairing
\begin{equation}
\left\langle -,-\right\rangle _{\lambda}:\mathcal{S}_{\varphi}\times
\Pi\rightarrow\{\pm1\},
\end{equation}
given by$^{{}}$%
\begin{equation}
(s,\pi)\rightarrow\Delta_{\lambda}(\pi^{(s)},\pi)
\end{equation}
is perfect in the sense that it identifies $\Pi$ as the dual of the finite
abelian group $\mathcal{S}_{\varphi}.$ From (8.3.2) yet again, we conclude:

\begin{lemma}
The map%
\begin{equation}
\left\langle -,-\right\rangle _{\lambda,D}:\mathcal{S}_{\varphi}\times\Pi
_{D}\rightarrow\{\pm1\}
\end{equation}
given by
\begin{equation}
(s,\pi_{D})\rightarrow\Delta_{\lambda,D}(\pi_{D}^{(s)},\pi_{D})
\end{equation}
is a perfect pairing of $\Pi_{D}$ with $\mathcal{S}_{\varphi},$ and moreover,%

\begin{equation}
\left\langle s,\pi_{D}\right\rangle _{\lambda,D}=\left\langle s,\pi
\right\rangle _{\overline{\lambda}},
\end{equation}
for all $s\in\mathcal{S}_{\varphi},\pi_{D}\in\Pi_{D},$ where $\pi
=\overline{\pi_{D}}\in\Pi.$
\end{lemma}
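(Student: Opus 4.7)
The plan is to first establish the identity $\left\langle s, \pi_D \right\rangle_{\lambda,D} = \left\langle s, \pi \right\rangle_{\overline{\lambda}}$ of (9.2.8) by combining Lemma 8.3.1 with the fact, from (9.2.2), that the classical Whittaker sign is already $\pm 1$, and then to deduce the perfect pairing assertion by transporting the classical perfect pairing with data $\overline{\lambda}$ across the bijection $\pi \rightarrow \overline{\pi}$ of Lemma 7.4.1.

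To prove (9.2.8), I apply equation (8.3.1) of Lemma 8.3.1 to the related pair $(\pi^{(s)}, \pi)$, with $\pi_D^{(s)} = \overline{\pi^{(s)}}$ and $\pi_D = \overline{\pi}$. This gives $\Delta_{\lambda, D}(\pi_D^{(s)}, \pi_D) = c \, \overline{\Delta_\lambda(\pi^{(s)}, \pi)}$, where $c = (\det V)(-1)$. By (9.2.2), $\Delta_\lambda(\pi^{(s)}, \pi) = \pm 1$, so the complex conjugation acts trivially and $\Delta_{\lambda, D}(\pi_D^{(s)}, \pi_D) = c \, \Delta_\lambda(\pi^{(s)}, \pi)$. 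On the classical side, the Whittaker normalization of the spectral factor depends on $\lambda$ only through the epsilon factor $\varepsilon_L(V, \psi)$, and replacing $\lambda$ by $\overline{\lambda}$ amounts to replacing $\psi$ by $\psi_{-1}$; the identity $\varepsilon_L(V, \psi_{-1}) = c \, \varepsilon_L(V, \psi)$ invoked in the proof of Lemma 6.3.2 then yields $\Delta_{\overline{\lambda}}(\pi^{(s)}, \pi) = c \, \Delta_\lambda(\pi^{(s)}, \pi)$. Comparing, $\Delta_{\lambda, D}(\pi_D^{(s)}, \pi_D) = \Delta_{\overline{\lambda}}(\pi^{(s)}, \pi)$, which is (9.2.8).

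For the perfect pairing assertion, I apply the classical result (9.2.4)--(9.2.5) with Whittaker data $\overline{\lambda}$ in place of $\lambda$: by Theorem 7.5 and Corollary 11.1 of \cite{Sh08}, the pairing $(s, \pi) \rightarrow \Delta_{\overline{\lambda}}(\pi^{(s)}, \pi)$ identifies $\Pi$ with the dual of $\mathcal{S}_\varphi$. Since Lemma 7.4.1, together with its extension noted in (7.5) via unitary parabolic induction, provides a bijection $\Pi \rightarrow \Pi_D$ via $\pi \rightarrow \overline{\pi}$ (and likewise $\Pi^{(s)} \rightarrow \Pi_D^{(s)}$), the identity (9.2.8) transports this perfect pairing across the bijection to yield the asserted perfect pairing of $\mathcal{S}_\varphi$ with $\Pi_D$.

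The only mild obstacle is the spectral analog of $\Delta_{\overline{\lambda}} = c \, \Delta_\lambda$ used in the second step, but this is strictly parallel to the geometric Lemma 6.3.2 because the spectral Whittaker factor is obtained by inserting the same epsilon factor into $\Delta_0$; all signs work out because $c^2 = 1$ and the classical values in question are already $\pm 1$.
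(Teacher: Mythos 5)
Your proof is correct and follows essentially the same route as the paper. The paper deduces the lemma directly by applying (8.3.2) (with $\overline{\lambda}$ substituted for $\lambda$) together with the $\pm1$ property (9.2.2) and the known perfect pairing for the classical packet; you reach the same conclusion via (8.3.1) plus the relation $\Delta_{\overline{\lambda}} = c\,\Delta_{\lambda}$, which is an equivalent bookkeeping of the same facts (indeed that relation already follows by dividing (8.3.1) by (8.3.2)), so the only difference is which half of Lemma 8.3.1 you lean on.
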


The sign $\left\langle s,\pi_{D}\right\rangle _{\lambda,D}$ is easily computed
explicitly via the Tate-Nakayama pairing when $\varphi$ is regular or, more
generally, when $\Pi$ is presented with nondegenerate data. On the other hand,
see \cite{Sh08, Sh14} for the \textit{totally degenerate} limits of discrete
series that exist, for example, when the derived group of $G$ is
simply-connected. This case, which includes the representations studied by
Carayol and Knapp \cite{CK07}, is also easy to calculate since each member of
the packet is generic and the sets of Whittaker data index the packet. We take
this up in \cite{Sh14}.

\end{document}